\newtheorem{Theorem}{Theorem}
\newtheorem{Lemma}{Lemma}
\newtheorem{proposition}{Proposition}
\begin{document}

\title{Generalized Stable Matching in Bipartite Networks}

\author{Ankur Mani, Asuman Ozdaglar, Alex (Sandy) Pentland\\
amani@mit.edu, asuman@mit.edu, pentland@mit.edu}

\date{}

\maketitle

\large

\begin{abstract}

In this paper we study the generalized version of weighted matching in bipartite networks. Consider a weighted matching in a bipartite network in which the nodes derive value from the split of the matching edge assigned to them if they are matched. The value a node derives from the split depends both on the split as well as the partner the node is matched to. We assume that the value of a split to the node is continuous and strictly increasing in the part of the split assigned to the node. A stable weighted matching is a matching and splits on the edges in the matching such that no two adjacent nodes in the network can split the edge between them so that both of them can derive a higher value than in the matching.  We extend the weighted matching problem to this general case and study the existence of a stable weighted matching. We also present an algorithm that converges to a stable weighted matching. The algorithm generalizes the Hungarian algorithm for bipartite matching. Faster algorithms can be made when there is more structure on the value functions.

\end{abstract}

\doublespacing


\section{Introduction}

In this paper we analyze the following problem. Consider a weighted matching in a bipartite network in which the nodes derive value from the split of the matching edge assigned to them if they are matched. The value a node derives from the split depends both on the split as well as the partner the node is matched to. We assume that the value of a split to the node is continuous and strictly increasing in the part of the split assigned to the node. A stable weighted matching is a matching and splits on the edges in the matching such that no two adjacent nodes in the network can split the edge between them so that both of them can derive a higher value than in the matching.  We extend the weighted matching problem to this general case and study the existence of a stable weighted matching. We also present an algorithm that converges to a stable weighted matching. The algorithm generalizes the Hungarian algorithm \cite{Hungarian-Method} for bipartite matching. Faster algorithms can be made when there is more structure on the value functions.

Weighted matching in bipartite networks has been studied in the context of linear valuations \cite{Combinatorial-Optimization}. The problem is often posed as such.

In a bipartite network $S=\left(A\cup B, E \subseteq A \times B\right)$, whose nodes belong to $A \cup B$ and whose edges connect nodes from $A$ to nodes in $B$ with weights $w\left(i, j\right)$ for the edge $\left(i,j\right)$ that can be split between $i$ and $j$ as $s_i$ and $s_j$ to give them values $V_i=s_i$ and $V_j=s_j$, find a matching $M^*$ with characteristic function $\chi^M$ that maximizes the sum of weights of edges in the matching
\begin{align*}
&{\sum_{\left(i,j\right) \in E}}{w\left(i,j\right)\chi\left(i,j\right)}
\end{align*}
The characteristic function $\chi^M$ must satisfy the following constraints to be the characteristic function of a matching.
\begin{align}
&\sum_{\left(i,j\right) \in E}\chi\left(i,j\right)\leq1,\forall i\in A \cup B\\
&\chi\left(i,j\right)\geq0,\forall\left(i,j\right)\in E\\
&\chi\left(i,j\right) \in \{0,1\},\forall\left(i,j\right)\in E
\end{align}
The last constraint is an integer constraint and can be neglected since the corners of the polytope resulting from the remaining constraints are integral. The above is called the maximum weight matching problem. In a finite graph with finite weights, the optimal solution exists and the optimal value is finite. The stable matching problem is the dual of the maximum weight matching problem which is to find the minimum sum of values given to the nodes in the network
\begin{align}
&\min\sum_{i \in A \cup B}V_{i}\\
&\mbox{such that } V_{i}+V_{j} \geq w\left(i,j\right),\forall \left(i,j\right) \in E\\
&V_{i} \geq 0
\end{align}
The existence of a stable matching is evident from the finiteness of the optimal value in the maximum weight matching problem. In fact at optimal solution for the stable matching problem, for any edge $\left(i,j\right) \in M^*$, $V_{i}+V_{j} = w\left(i,j\right)$. For any maximum weighted matching, there exists splits $\bold{s} = \bold{V}$ that is an optimal solution to the stable matching problem. This problem has been well understood and several algorithms have been proposed to find the optimal stable matching. The extensions, when the values $V$ are increasing functions of the split and do not depend upon the edge, can be reduced to the above problem.

We study the problem when the values $V$ depend upon the edge as well as the part of the split given to the node. The stable matching problem in this case is as such. Find a matching $M^\circ$ and a split $\bold{s}$ such that
\begin{align}
&s_i + s_j = w\left(i,j\right), \forall \left(i,j\right) \in M^\circ\\
&v_i = V_i\left(j, s_i\right), v_j = V_j\left(i, s_j\right), \forall \left(i,j\right) \in M^\circ\\
&V^{-1}_{i}\left(j,v_i\right) + V^{-1}_{j}\left(i,v_j\right) \geq w\left(i,j\right),\forall \left(i,j\right) \in E\\
&s_{i} \geq 0, \forall i \in A \cup B
\end{align}
The existence of such a matching and a split is not evident. In this paper, we show that such a matching and a split exists and we give an algorithm to find such a matching and a split. The problem features in many practical problems. We give a few examples.

Consider the stable marriage problem and related problems studied in \cite{Gale-Shapley} and later by several others. A survey of related literature can be found in \cite{Roth-Sotomayor}. The classical formulation assumes exogeneous partner preferences. Other formulations including \cite{Theory-of-Marriage} study endogeneous partner preferences arising from types of partners. An important and more realistic formulation is to consider that utilities of individuals in a marriage depends both on the type or the identity of the partner as well as the effort the partner puts in the marriage. In this scenario, a stable marriage is the one in which the neither partner in the marriage has a proposal for mariage in which the partner will have a higher utility.

Another example is the exchanges in buyer-seller networks \cite{Kranton-Minehart}. The problem has been studied in the context of indivisible goods. An important scenario is the case of divisible goods with the buyer-seller relations being exclusive. When the preferences for the goods are strictly convex, continuous and strongly monotone, we observe a connected contract curve or the set of individually rational pareto-efficient exchanges between any adjacent buyer-seller pair in the network. As we move along the contract curve in a given direction, the utility of buyer/seller strictly decreases and the utility of seller/buyer strictly increases. The stable set of exchanges in this network is the one in which all exchanges are stable or no adjacent buyer-seller pair in the network can do better by simultaneously breaking their current contracts and forming a new contract among them.

An important example is the study of bargaining in networks. This problem has recently been studied widely and takes the form of the stable matching problem in teh case of linear utilities. However, often in real life bargaining situations, the utility is non-trasferable between the bargaining parties through a quasilinear numeraire. In such situations as the sum of offers to the two parties in bargaining is not constant. A stable bargaining solution in this case takes a different form as studied in this paper. Another line of work that can benefit by the results in this paper is the work on social games introduced in \cite{Social-Games}.

The organization of the rest of the paper is as such. In the next section, we introduce the setup. We try to maintain the notations close to the notations in the matching literature while introduce additional terminology as required. In section 3, we show introduce some important concepts that are needed to prove the existence of a stable matching. Finally in section 4, we show a contructive proof and an algorithm to find the stable matching.


\section{Setup}

In this section we formulate the problem and introduce necessary terminology.

\subsection{Network and Payoffs}

Assume $A$ and $B$ are two finite and mutually exclusive sets of nodes and $X=A \cup B$. A bipartite network between $A$ and $B$ is a graph $S=\left(X, E \subseteq A \times B\right)$, whose nodes belong to $X$ and whose edges connect nodes from $A$ to nodes in $B$. Given a bipartite network $S$, we will refer to the set of nodes as $X^S$, the sets of nodes in $A$ and $B$ as $A^S$ and $B^S$ respectively, and the set of edges as $E^S$ when necessary. When the node set $X$ is understood, we will refer to the network by the edge set $E$. Without loss of generality, we will assume that the graph $S$ is connected.

A set of nodes $X' \subseteq X$ induces a {\bf subgraph} $S_{|X'}=\left(X', E_{|X'}\right)$ of $S$, such that $E_{|X'} = \{\left(i,j\right) \in E : i \in X', j \in X'\}$.

The set of neighbors of a node $i \in A$ is $Nbr^S\left(i\right) = \{j \in B : \left(i,j\right) \in E\}$. The set of neighbors of a node of $j \in B$ is $Nbr^S\left(j\right) = \{i \in A : \left(i,j\right) \in E\}$. When the context is well understood, we will also refer to $Nbr^S\left(i\right)$ as $Nbr^E\left(i\right)$ or just $Nbr\left(i\right)$. The set of neighboring nodes of $x \subseteq X$ is $Nbr^S\left(x\right) = \cup_{i \in x}Nbr^S\left(i\right)$.

A {\bf weight} function $w:E \rightarrow \mathbb{R}_{+}$ assigns a weight to each edge. An edge $\left(i, j\right)$ with $i \in A$ and $j \in B$ has a weight $w\left(i, j\right)$ that can be split between $i$ and $j$.

A {\bf split }$s^{\left(i,j\right)}$ on the edge $\left(i,j\right)$ is the pair $\left(s_{i},s_{j}\right)$, with $s_{i}+s_{j}=w\left(i,j\right)$.

The nodes derive payoffs from the part of the split given to them. For a split $s^{\left(i,j\right)}$, the payoff of the node $i$ is $u_{i}\left(j, s_{i}\right)$ and the payoff of the node $j$ is $u_{j}\left(i, s_{j}\right)$. The payoff of a node depends upon both the part of the split given to the node and the edge on which the split is made. Thus for each edge the payoff of a node is a unique function of the part of the split given to the person. We assume that these payoff functions are strictly increasing and continuous and hence they are invertible and the inverse functions are also strictly increasing and continuous.

We also define payoffs of nodes as a function of its neighbor when the split is made between them. The payoff of node $j$ for the split on edge $\left(i,j\right)$ is a function of the payoff of node $i$ is $v_{j}\left(i,v_{i}\right) = u_{j}\left(i,w\left(i,j\right)-u_{i}^{-1}\left(j,v_{i}\right)\right)$. So the payoffs of neighboring nodes for the split between them are strictly decreasing and continuous with respect to one another. For simplicity, we will assume that the function $v_i\left(j, x\right)$ is defined for all $x \in \bold{R}$ and is strictly decreasing and continuous. We will refer to these functions as {\bf pareto payoff function}.

\subsection{Matching}

A {\bf matching} in the bipartite network is a subset of edges $M \subseteq E$ such that no two edges in $M$ share a common node. The size of the matching $|M|$ is the number of edges in the matching. The matching defines a characteristic function on the set of edges in the bipartite network $\chi^{M}:E\rightarrow\{0,1\}$ where $\forall \left(i,j\right) \in E$,

$\chi^{M}\left(i,j\right) =
\begin{cases}
	1 & \mbox{if} \left(i,j\right)\in M\\
	0 & \mbox{if} \left(i,j\right)\notin M
\end{cases}$

such that\\
$\forall i \in A$, $\sum_{j \in Nbr\left(i\right)} \chi^{M}\left(i,j\right)\leq1$ and\\
$\forall j \in B$, $\sum_{i \in Nbr\left(j\right)} \chi^{M}\left(i,j\right)\leq1$.

The {\bf match} for a node $i \in A$ in the matching $M$ is\\
$M\left(i\right) =
\begin{cases}
j \in Nbr\left(i\right) & \mbox{if} \chi^{M}\left(i,j\right) = 1\\
null & \mbox{if} \sum_{j \in Nbr\left(i\right)} \chi^{M}\left(i,j\right) = 0
\end{cases}$.

We say that a node $i$ is {\bf matched} if $M\left(i\right) \neq null$.

A split for a matching $M$ is a function $s^{M} : A \cup B \rightarrow \mathbb{R}_{+}$ such that\\
$\forall \left(i,j\right) \in M, s^{M}\left(i\right) + s^{M}\left(j\right) = w\left(i,j\right)\\
\forall i \in A, s^{M}\left(i\right) = 0 \mbox{if } \sum_{j \in Nbr\left(i\right)} \chi^{M}\left(i,j\right) = 0\\
\forall j \in B, s^{M}\left(j\right) = 0 \mbox{if } \sum_{i \in Nbr\left(j\right)} \chi^{M}\left(i,j\right) = 0$.

A {\bf weighted matching} $\left(M, s^{M}\right)$ is a pair matching and a split for the matching.

The {\bf payoff profile} $\bold{U}\left(M,s^{M}\right)$ for a weighted matching $\left(M, s^{M}\right)$ is a vector where each element is the payoff of a node for the given weighted matching. The payoff of a node $i$ for the weighted matching $\left(M, s^{M}\right)$ is\\
$U_{i}\left(M,s^{M}\right) = u_{i}\left(M\left(i\right),s^{M}\left(i\right)\right)$.

A weighted matching is {\bf stable} if $\forall \left(i,j\right) \in E$ and all splits $s^{\left(i,j\right)}$ on $\left(i,j\right)$,\\
$u_{i}\left(j,s_{i}\right) \leq U_{i}\left(M,s^{M}\right)$ and
$u_{j}\left(i,s_{j}\right) \leq U_{j}\left(M,s^{M}\right)$.

\subsection{Paths}

A {\bf path} in the network $S$ is a subgraph $P = \left(X^{P}, E^{P}\right)$, where $X^P \subseteq A \cup B$ is a set of nodes and $E^P$ is a set of edges with both end points in $X^P$ such that two nodes in $X^P$ have exactly one edge in $E^P$ and all other nodes in $X^P$ have exactly two edges in $E^P$. The two nodes with exactly one edge will be referred to as the {\bf end nodes}. A path also induces index function over its nodes as follows:

\begin{enumerate}

\item Pick an end node and set its index as $0$. This node is the {\bf source node}.
\item Set $i \leftarrow \mbox{last indexed node}$.
\item If $i$ is an end node, then stop else index the only unindexed neighbor of $i$ as (index of $i$)$+1$. The end node with the highest index is the {\bf sink node}.
\end{enumerate}
This index generates a sequence of nodes $\{x_{n}^{P}\}$ where the subscript stands for the index and $x_{0}^{P}$ and $x_{|X|}^{P}$ are end nodes.

Thus a path $P$ of length $N$ can be seen as a sequence $\{x_{n}\}_{n \in \{0,...,N\}}$ of nodes in $A \cup B$, such that $\forall n<N, \left(x_{n},x_{n+1}\right) \in E$. We call this a path from the source to the sink node. Alternatively, a path is a sequence of nodes such that for each node in the sequence shares edges with both the immediately preceding and immediately succeeding nodes. When the source and sink is determined for the path $P$, we will refer a path from the source $i$ to sink $j$ as $P_{i,j}$. The reverse path from $j$ to $i$ will be refered to as $P_{j,i}$.

A {\bf subpath} $P'$ of a path $P$ is a connected subgraph of the path $P$. Alternatively, a subpath $P' \subseteq P$ between nodes $x_{m}, x_{M} \in X^P$ is a subsequence $\{x_{n}^P\}_{n \in \{m,...,M\}}$. A subpath is a path by itself.

The union of two paths $P_1$ and $P_2$ is also a path $P_3=P_1 \cup P_2$ if $P_1$ and $P_2$ share exactly one end node.

We will denote the set of paths from $i$ to $j$ in a network $S$ as $\bold{P}^S_{i,j}$.


\subsection{Offers}
	
An {\bf offer profile} is a vector $\bold{O} \in \bold{R}^{A \cup B}$ where the element $O_{i}$ is node $i$'s offer. We will denote the restriction of an offer profile $\bold{O}$ to a set of nodes $X$ as $\bold{O}_{|X}$.

An offer profile $\bold{O}$ is {\bf feasible}, if $\exists \left(M, s^{M}\right)$ weighted matching, with payoff profile $\bold{U}\left(M,s^{M}\right)=\bold{O}$.

An offer profile $\bold{O}$ is {\bf stable} if $\forall (i,j) \in E$, $O_j \geq v_{j}\left(i, O_{i}\right)$.

Using the definition and properties of the pareto payoff functions, we can reformulate the stable matching problem as such. Find a matching $M^\circ$ and a split $\bold{s}$ such that
\begin{align}
&s_i + s_j = w\left(i,j\right), \forall \left(i,j\right) \in M^\circ\\
&o_i = u_i\left(j, s_i\right), o_j = u_j\left(i, s_j\right), \forall \left(i,j\right) \in M^\circ\\
&o_j \geq v_{j}\left(i, o_{i}\right)\},\forall \left(i,j\right) \in E\\
&s_{i} \geq 0, \forall i \in A \cup B
\end{align}

The first, second and fourth inequalities provide the constraints for the offer profile to be feasible and the third inequality provide the constraints for the offer profile to be stable. Thus, if we have a feasible and stable offer profile, then we have a weighted stable matching. Hence, in this paper, we will focus on finding a feasible and stable offer profile.

Given an offer profile $\bold{O}$, the {\bf equality subgraph} $EQ\left(\bold{O}\right)$ is the subset of edges $E$ with $EQ\left(\bold{O}\right)=\{\left(i,j\right) \in E : O_{j}=v_{j}\left(i, O_{i}\right)\}$.

We will refer to the neighbors of a node $i$ in the equality subgraph $EQ\left(\bold{O}\right)$ as $Nbr^{EQ\left(\bold{O}\right)}\left(i\right)$.

Given an offer profile $\bold{O}$, a path $P$ is feasible if $E^P \subseteq EQ\left(\bold{O}\right)$.


Given a node $i$ with offer $o_{i} \geq 0$, a path $P$ with an end node $i$ induces an offer for each node $x_{n}^{P}$ in the path as follows:
\begin{itemize}
\item $x_{0}^{P}=i$, Therefore, $O_{x_{0}^{P}}=o_{i}$
\item $\forall 0<n<N, O_{x_{n}^{P}}=v_{x_{n}^{P}}\left(x_{n-1}^{P},O_{x_{n-1}^{P}}\right)$.
\end{itemize}

For any pair of nodes $i,j$ and a path $P$ from $i$ to $j$, we define the {\bf path induced offer function} $f^P_{i,j}:\bold{R} \rightarrow \bold{R}$ where $f^P_{i,j}\left(x\right)$ is the offer that $P$ induces for $j$ given $i$ has the offer $x$. Clearly, $f^P_{i,j}$ is continuous since the pareto payoff functions are continuous. Also $f^P_{i,j}$ is strictly increasing if both $i,j \in A$ or both $i,j \in B$ and strictly decreasing if either $i \in A$ and $j \in B$ or $i \in B$ and $j \in A$ since pareto payoff functions are strictly decreasing.

Given a node $i$ with offer $x$ and another node $i'$, a path $P^*_{i,i'}$ from $i$ to $i'$ is {\bf maximum offer inducing path} from $i$ to $i'$ given the offer $x$ on $i$ if $$P^*_{i,i'} \in \arg \max_{P \in \bold{P}^S_{i,i'}}{f^P_{i,i'}\left(x\right)}.$$ The maximum offer inducing paths and the maximum path induced offers have important properties that we will use for the main result. In the following two lemmas we state these  properties.

\begin{Lemma} \label{lem:max-path}

Assume $i,i' \in A$ and $x \in \bold{R}$. Assume $P^*_{i,i'}$ is a maximum offer inducing path from $i$ to $i'$ given the offer $x$ on $i$. If $P'_{i,i''} \subseteq P^*_{i,i'}$ is the subpath between nodes $i$ and $i'' \in A^{P^*_{i,i'}}$. Then $P'_{i,i''}$ is a maximum offer inducing path from $i$ to $i''$ given the offer $x$ on $i$.

\end{Lemma}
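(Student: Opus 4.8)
The plan is to argue by the principle of optimality, using a cut-and-paste exchange together with the monotonicity of composed path induced offer functions. First I would split the optimal path at the intermediate node: write $P^*_{i,i'}$ as the concatenation of the prefix $P'_{i,i''}$ with a suffix $P''$ running from $i''$ to $i'$. Because the path induced offer is defined by propagating the offer node by node through the pareto payoff functions, the offer that $P^*_{i,i'}$ induces at $i'$ factors through the offer it induces at $i''$, that is,
\begin{equation*}
f^{P^*_{i,i'}}_{i,i'}(x) = f^{P''}_{i'',i'}\!\left(f^{P'}_{i,i''}(x)\right).
\end{equation*}
Since $i,i'' \in A$ and $i'',i' \in A$, each segment composes an even number of (strictly decreasing, continuous) pareto payoff functions, so both factors are strictly \emph{increasing}; in particular the suffix map $f^{P''}_{i'',i'}$ is strictly increasing.

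Next I would suppose, for contradiction, that $P'_{i,i''}$ is not a maximum offer inducing path from $i$ to $i''$ given the offer $x$ on $i$. Then there is a path $Q_{i,i''}$ with $f^{Q_{i,i''}}_{i,i''}(x) > f^{P'}_{i,i''}(x)$. Feeding this strictly larger offer at $i''$ into the strictly increasing suffix map $f^{P''}_{i'',i'}$ produces, via the factorization above, a strictly larger offer at $i'$ than $f^{P^*_{i,i'}}_{i,i'}(x)$. If the concatenation of $Q_{i,i''}$ with $P''$ were itself an admissible path from $i$ to $i'$, this would contradict the maximality of $P^*_{i,i'}$ and finish the argument.

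The main obstacle is precisely that $Q_{i,i''}$ and $P''$ need not be node disjoint, so their union need not satisfy the definition of a path (interior nodes incident to exactly two edges, end nodes to one). To repair this I would let $y$ be the first node encountered along $Q_{i,i''}$ (traversing from $i$) that also lies on $P''$; such a $y$ exists because $i''$ is common to both, and truncating $Q$ at $y$ and then following $P''$ from $y$ to $i'$ yields a genuine simple path. The remaining, and genuinely delicate, step is to verify that this spliced path still induces a strictly larger offer at $i'$: this reduces to comparing the offer the $Q$-prefix induces at $y$ against the offer $P^*$ induces at $y$, together with the (increasing or decreasing, depending on whether $y \in A$ or $y \in B$) monotonicity of $f^{P''}_{y,i'}$. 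I expect establishing this post-truncation comparison to be the crux, since the naive inequality need not go through at an arbitrary shared node; a natural way to organize the bookkeeping is induction on the number of nodes in the suffix $P''$, peeling off one $A$-node at a time so that every composition invoked in the comparison stays strictly increasing.
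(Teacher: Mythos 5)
Your factorization $f^{P^*_{i,i'}}_{i,i'}(x)=f^{P''}_{i'',i'}\bigl(f^{P'}_{i,i''}(x)\bigr)$ and the observation that the suffix map is strictly increasing are both correct, and you have put your finger on exactly the step that the paper's one-sentence proof (``follows from the principle of optimality'') sweeps under the rug: the exchange argument needs the improved path $Q_{i,i''}$ to be internally disjoint from the suffix $P''$, because paths here are simple by definition. However, the repair you sketch (truncate $Q$ at its first meeting point $y$ with $P''$, splice, and establish the ``post-truncation comparison'' by induction) cannot be completed, because that comparison is genuinely false --- and in fact the lemma itself is false. Take $A=\{i,i'',a,i'\}$, $B=\{j_1,\dots,j_5\}$, transferable utilities $u_k(\ell,s)=s$ (so $v_j(i,o)=w(i,j)-o$), and weights
\begin{align*}
&w(i,j_1)=w(i'',j_1)=1,\qquad w(i'',j_2)=1,\quad w(a,j_2)=3,\qquad w(a,j_3)=w(i',j_3)=1,\\
&w(i,j_4)=1,\quad w(a,j_4)=2,\qquad w(a,j_5)=w(i'',j_5)=1.
\end{align*}
There are exactly three paths from $i$ to $i'$, inducing offers $x+2$ (via $i\to j_1\to i''\to j_2\to a\to j_3\to i'$), $x+1$ (via $i\to j_4\to a\to j_3\to i'$), and $x$ (via $i\to j_1\to i''\to j_5\to a\to j_3\to i'$); so $P^*=i\to j_1\to i''\to j_2\to a\to j_3\to i'$ is the unique maximum offer inducing path to $i'$, and its subpath at the $A$-node $i''$ is $P'=i\to j_1\to i''$, inducing $x$. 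But there are exactly three paths from $i$ to $i''$, inducing $x$ (namely $P'$), $x+1$ (via $Q=i\to j_4\to a\to j_5\to i''$), and $x-1$ (via $i\to j_4\to a\to j_2\to i''$): the unique maximum offer inducing path to $i''$ is $Q$, not $P'$.

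The example shows concretely why your induction cannot close the gap: $Q$ meets $P''=i''\to j_2\to a\to j_3\to i'$ only at $y=a$, and the spliced path $i\to j_4\to a\to j_3\to i'$ induces $x+1<x+2$ at $i'$, so no contradiction with the optimality of $P^*$ arises; indeed $Q$ induces only $x+1$ at $a$ while $P^*$ induces $x+2$ there, even though $Q$ strictly beats $P'$ at $i''$. This is precisely the ``naive inequality need not go through'' scenario you anticipated, and it is unavoidable. The structural reason is that Bellman's principle holds for walks, where splicing is always legal, but not for simple paths when the graph contains a cycle around which induced offers strictly increase: here each traversal of $a\to j_5\to i''\to j_2\to a$ raises the offer at $a$ by $2$, so the optimum over walks is $+\infty$ while the optimum over simple paths is finite, and optimal substructure breaks for the same reason it does for longest simple paths. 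So your instinct that this step is the crux was right, but it is not a bookkeeping issue --- it is a counterexample, both to your proof strategy and to the paper's appeal to the principle of optimality. The flaw is substantive for the paper: with the offers of Lemma \ref{lem:conn-stbl} built from $o_i=x$ in this same network, the equality subgraph has the three components $\{i,j_1,j_4\}$, $\{a,j_2,j_3,i'\}$, $\{i'',j_5\}$, contradicting that lemma's connectivity claim as well.
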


\begin{proof}

The proof follows from the principle of optimality \cite{Dynamic-Programming} and is omitted.


\end{proof}

\begin{Lemma} \label{lem:conn-stbl}

Pick $i \in A$ and $o_i \in \bold{R}$.\\
For all $i' \in A$ set $$O_{i'} =  \max_{P \in \bold{P^S_{i,i'}}}{f^P_{i,i'}\left(o_i\right)}.$$\\
For all $j \in B$, set $$O_{j} = \max_{i' \in Nbr\left(j\right)}{v_j\left(i',O_{i'}\right)}.$$

Then the following hold true about the equality subgraph $EQ\left(\bold{O}\right)$

\begin{enumerate}

\item the equality subgraph $EQ\left(\bold{O}\right)$ is connected and the offer profile $\bold{O}$ is stable. Therefore all nodes $j \in B$ have at least one edge in the equality subgraph.

\item $\forall \left(i',j\right) \in E$, either $\left(i',j\right) \in EQ\left(\bold{O}\right)$ or all paths from $i$ to $i'$ in $EQ\left(\bold{O}\right)$ include at least one of the nodes $i'' \in Nbr^{EQ\left(\bold{O}\right)}\left(j\right)$.


\item If $\left(i',j'\right), \left(i',j''\right) \in E \setminus EQ\left(\bold{O}\right)$ and $v_{i'}\left(j',O_{j'}\right) > v_{i'}\left(j'',O_{j''}\right)$, then all paths from $i$ to $j'$ in $EQ\left(\bold{O}\right)$ include at least one node in $Nbr^{EQ\left(\bold{O}\right)}\left(j''\right)$.

\item If $\left(i',j'\right) \in E \setminus EQ\left(\bold{O}\right)$, then for any path $P_{i,i'} \in \bold{P^S_{i,i'}}$ that does not include at least one node in $Nbr^{EQ\left(\bold{O}\right)}\left(j'\right)$, $f^{P_{i,i'}}_{i,i'} \leq v_{i'}\left(j',O_{j'}\right)$.

\end{enumerate}

\end{Lemma}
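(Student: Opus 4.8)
The plan is to take the three assertions in order, separating stability (which is immediate) from connectedness (which is the real work) and then reading off the structural items 2--4 from the same mechanism.

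First I would dispatch stability together with the claim that every $j\in B$ has an $EQ$-edge. By construction $O_j=\max_{i'\in Nbr(j)}v_j(i',O_{i'})\ge v_j(i',O_{i'})$ for every neighbor $i'$, which is exactly the stability inequality on the $B$-side; since $v_j(i',\cdot)$ and $v_{i'}(j,\cdot)$ are mutually inverse strictly decreasing pareto payoff functions, this same inequality reads $O_{i'}\ge v_{i'}(j,O_j)$, so $\mathbf{O}$ is stable on every edge regardless of orientation. Letting $i^*(j)\in Nbr(j)$ attain the maximum gives $O_j=v_j(i^*(j),O_{i^*(j)})$, i.e.\ $(i^*(j),j)\in EQ(\mathbf{O})$, so each $B$-node carries a tight edge.

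Second, connectedness. The engine I would build combines the principle of optimality (Lemma \ref{lem:max-path}) with a forced-tie observation. Two preliminary facts: (a) along any feasible path the induced offers coincide with $\mathbf{O}$, since on an $EQ$-edge $O_{x_{n+1}}=v_{x_{n+1}}(x_n,O_{x_n})$ by definition; (b) if $i',l\in Nbr(b)$ can each be reached from $i$ by an offer-maximizing path that avoids $b$ and the other node, then $v_b(i',O_{i'})=v_b(l,O_l)$, because prepending the optimal path to $i'$ and traversing $i'\to b\to l$ gives $O_l\ge v_l(b,v_b(i',O_{i'}))$, whence applying the decreasing map $v_b(l,\cdot)$ yields $v_b(l,O_l)\le v_b(i',O_{i'})$, and symmetry forces equality. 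With (b) I would prove connectedness by induction on the length $2k$ of a maximum offer inducing path $P^*_{i,i'}$: writing its last two steps as $i''\to b^*\to i'$, Lemma \ref{lem:max-path} makes the prefix to $i''$ optimal of length $2k-2$, so $i''$ lies in the component of $i$ by induction, and the forced-tie identity at $b^*$ applied to $i''$ against the other neighbors of $b^*$ would certify $(i'',b^*)\in EQ(\mathbf{O})$, placing $b^*$ and the onward edge to $i'$ in that component.

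Third, items 2--4 are contrapositives of the same computation. For item 2, if $(i',j)\notin EQ(\mathbf{O})$ then $O_j>v_j(i',O_{i'})$; were there a feasible path from $i$ to $i'$ missing every node of $Nbr^{EQ(\mathbf{O})}(j)$, fact (a) plus the forced-tie argument extended by $i'\to j$ would force $v_j(i',O_{i'})=O_j$, a contradiction, so every such path meets $Nbr^{EQ(\mathbf{O})}(j)$. Item 4 is the quantitative form of this — a path avoiding $Nbr^{EQ(\mathbf{O})}(j')$ cannot induce an offer exceeding $v_{i'}(j',O_{j'})$ — and item 3 follows by comparing two non-tight edges at $i'$ and using monotonicity of $v_{i'}$ to order the induced offers.

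The hard part will be the avoidance and disjointness bookkeeping inside the forced-tie step, not its one-line inequality. Identity (b) requires the optimal path to the node being certified tight to steer clear of the very bottleneck $B$-node it is supposed to be tight to, and of the competing neighbor; but every path emanates from the single source $i$, so the vertices of $i$'s own optimal paths can never be avoided, and a node that reaches $i$ only through a heavily contested $B$-node is exactly where the induction threatens to stall. Making it go through will require showing that each node admits a maximum offer inducing path internally disjoint from its certifying $B$-node — or, absent such a uniform choice, treating the contested bottlenecks directly — and I expect this combinatorial point, rather than any analytic estimate, to be where the proof must do its real work.
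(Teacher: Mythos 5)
You have reproduced the paper's own strategy: stability and $B$-side coverage by construction, connectedness by induction on the length of maximum offer inducing paths via Lemma \ref{lem:max-path}, and the structural items by extend-and-contradict arguments. But the point you defer in your last paragraph is not bookkeeping that a more careful writer would dispatch; it is a genuine gap, and it is exactly the gap the paper itself hides behind the phrase ``by a similar argument as above'' in its inductive step. The tie-forcing step must extend a maximum offer inducing path $P^{*}_{i,i''}$ by $i''\rightarrow j\rightarrow i'''$, where $i'''$ attains the maximum defining $O_{j}$; this extension is a legitimate path only when $i'''\notin X^{P^{*}_{i,i''}}$, and nothing in the construction guarantees that. (Your item 2 is safe, because there the hypothesis that the path avoids $Nbr^{EQ\left(\mathbf{O}\right)}\left(j\right)$ supplies exactly the needed disjointness; the connectedness claim has no such hypothesis to lean on.)

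Moreover, the gap cannot be closed, because claim 1 is false as stated. Take transferable utility, $v_{y}\left(x,o\right)=w\left(x,y\right)-o$ on every edge; $A=\{a_{0},a_{1},a_{2},a_{3}\}$, $B=\{b_{1},b_{2},b_{3}\}$; edges forming the path $a_{0}b_{1}a_{1}b_{2}a_{2}b_{3}a_{3}$, each of weight $10$, plus the chord $\left(a_{1},b_{3}\right)$ of weight $15$; and $i=a_{0}$, $o_{i}=0$. Every path from $a_{0}$ to $a_{1},a_{2},a_{3}$ induces offer $0$ or $-5$, so $O_{a_{1}}=O_{a_{2}}=O_{a_{3}}=0$, while $O_{b_{1}}=O_{b_{2}}=10$ and $O_{b_{3}}=\max\{15,10,10\}=15$. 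Hence the only tight edge at $b_{3}$ is $\left(a_{1},b_{3}\right)$, and $a_{3}$, whose sole neighbor is $b_{3}$, has no edge at all in $EQ\left(\mathbf{O}\right)$: the equality subgraph is disconnected, even though the offer profile is stable. This is precisely the configuration you warned about: the maximizer $a_{1}$ at $b_{3}$ lies on the unique maximum offer inducing path $a_{0}b_{1}a_{1}b_{2}a_{2}$ to $a_{2}$, so your forced-tie identity (b) is inapplicable and the tie genuinely fails. Appending a pendant pair $b_{4},a_{4}$ to $a_{3}$ with weight-$10$ edges even splits $EQ\left(\mathbf{O}\right)$ into two components that both contain edges, while the enlarged graph still satisfies $|A|-|B|=1$, has a near-perfect matching with an alternating spanning tree, and admits a stable alternating spanning tree generating offer profile with $O_{a_{0}}>0$; so the failure persists under all hypotheses in force when Lemma \ref{lem:conn-stbl} is invoked inside Lemma \ref{lem:existence-stable-spanning-offer-profile}. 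In short, your proposal is faithful to the paper's argument, but neither it nor the paper proves claim 1; the claim needs either a stronger hypothesis or a weaker conclusion (for example, a statement restricted to the component of $i$), with corresponding repairs downstream.
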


\begin{proof}

Consider all nodes in $A$ for which the maximum offer inducing path is of length $2$. Pick any of such nodes $i'$ and its neighbor $j$ along the maximum offer inducing path. Clearly $O_j \geq v_j\left(i', O_{i'}\right)$ from the construction of $O_j$. Assume $O_j > v_j\left(i', O_{i'}\right)$. Then $\exists \left(i'',j\right) \in E$ with $O_{i''} = v_{i''}\left(j, O_j\right) < v_{i''}\left(j, v_j\left(i', O_{i'}\right)\right)$, the inequality exists because the functions $v$ are strictly increasing. This implies that $O_{i''}$ is not the maximum path induced offer induced on $i''$ over all paths from $i$ to $i''$. Therefore by contradiction $O_j = v_j\left(i', O_{i'}\right)$ and $i'$ is connected to $i$ in $EQ\left(\bold{O}\right)$ through a maximum offer inducing path.

Now assume that all nodes  in $A$ for which the maximum offer inducing path is of length less than $n$ is connected to $i$ through a maximum offer inducing path. Then following lemma \ref{lem:max-path} for all nodes in $A$ for which the maximum offer inducing path is of length $n$ all nodes in $A$ along the path are connected to $i$ along the same path. Also by a similar argument as above all nodes in $A$ for which the maximum offer inducing path is of length $n$ is connected to $i$ through a maximum offer inducing path. Thus by induction, all nodes  in $A$ are connected to $i$ through a maximum offer inducing path. As a consequence all nodes in $B$ that belong to any of the maximum offer inducing paths are also connected to $i$ through the respective maximum offer inducing paths.

Now since all nodes in $B$ that do not belong to any maximum offer inducing paths are connected to at least one node in $A$, therefore they are also connected to $i$ through some path. Hence the equality subgraph is connected. The offer profile is stable because for all $j \in B$, $O_{j} = \max_{i' \in Nbr\left(j\right)}{v_j\left(i',O_{i'}\right)}$.

We now prove the second claim. Assume that there exists $\left(i',j\right) \in E \setminus EQ\left(\bold{O}\right)$ and path $P_{i,i'}$ from $i$ to $i'$ in $EQ\left(\bold{O}\right)$ that does not include any of the nodes in $Nbr^{EQ\left(\bold{O}\right)}\left(j\right)$. Then pick a node $i'' \in Nbr^{EQ\left(\bold{O}\right)}\left(j\right)$ and consider the path $P_{i,i'} \cup \{i',j,i''\}$. This path induces offers $v_j\left(i',O_{i'}\right)<O_j$ on $j$ and $v_{i''}\left(j,v_j\left(i',O_{i'}\right)\right)>v_{i''}\left(j,O_j\right)=O_{i''}$. This is a contradiction because by construction of $\bold{O}$, $O_{i''}$ was the maximum offer induced on $i''$ over all paths from $i$ to $i''$ given the offer $x$ on $i$. Hence the claims holds true.

We now prove the third claim. Assume there exists $\left(i',j'\right), \left(i',j''\right) \in E \setminus EQ\left(\bold{O}\right)$ and $v_{i'}\left(j',O_{j'}\right) > v_{i'}\left(j'',O_{j''}\right)$. From the second claim, all paths from $i$ to $i'$ in $EQ\left(\bold{O}\right)$ include a node from $Nbr^{EQ\left(\bold{O}\right)}\left(j'\right)$ and a node from $Nbr^{EQ\left(\bold{O}\right)}\left(j''\right)$. First we will show that either all paths from $i$ to $j'$ in $EQ\left(\bold{O}\right)$ include at least one node in $Nbr^{EQ\left(\bold{O}\right)}\left(j''\right)$ or all paths from $i$ to $j''$ in $EQ\left(\bold{O}\right)$ include at least one node in $Nbr^{EQ\left(\bold{O}\right)}\left(j'\right)$. Then we will show that it is actually the first case.
Assume there exists a path $P_{i,j'}$ from $i$ to $j'$ in $EQ\left(\bold{O}\right)$ not including any node in $Nbr^{EQ\left(\bold{O}\right)}\left(j''\right)$ and a path $P_{i,j''}$ from $i$ to $j''$ in $EQ\left(\bold{O}\right)$ not including any node in $Nbr^{EQ\left(\bold{O}\right)}\left(j'\right)$. Pick a path $P_{i, i'}$. This path includes a node $i''$ with the highest index among all nodes in $Nbr^{EQ\left(\bold{O}\right)}\left(j'\right) \cup Nbr^{EQ\left(\bold{O}\right)}\left(j''\right)$ with the highest index. Consider the subpath $P_{i'',i'} \subset P_{i,i'}$. If $i'' \in Nbr^{EQ\left(\bold{O}\right)}\left(j'\right)$, then the path $P_{i,j'} \cup\{j',i''\} \cup P_{i'',i'}$ is a path from $i$ to $i'$ not including any node in $Nbr^{EQ\left(\bold{O}\right)}\left(j''\right)$. If $i'' \in Nbr^{EQ\left(\bold{O}\right)}\left(j''\right)$, then the path $P_{i,j''} \cup\{j'',i''\} \cup P_{i'',i'}$ is a path from $i$ to $i'$ not including any node in $Nbr^{EQ\left(\bold{O}\right)}\left(j'\right)$. In either case, this contradicts the second claim, so either all paths from $i$ to $j'$ in $EQ\left(\bold{O}\right)$ include at least one node in $Nbr^{EQ\left(\bold{O}\right)}\left(j''\right)$ or all paths from $i$ to $j''$ in $EQ\left(\bold{O}\right)$ include at least one node in $Nbr^{EQ\left(\bold{O}\right)}\left(j'\right)$.
Now assume that all paths from $i$ to $j''$ in $EQ\left(\bold{O}\right)$ include at least one node in $Nbr^{EQ\left(\bold{O}\right)}\left(j'\right)$. Pick a path $P_{i,i'}$ from $i$ to $i'$ in $EQ\left(\bold{O}\right)$ and the node $i_*$ on the path $P_{i,i'}$ with the lowest
index among all nodes in $Nbr^{EQ\left(\bold{O}\right)}\left(j'\right) \cup Nbr^{EQ\left(\bold{O}\right)}\left(j''\right)$. From the assumption, $i_* \in Nbr^{EQ\left(\bold{O}\right)}\left(j'\right)$ 

Pick a node $i'' \in Nbr^{EQ\left(\bold{O}\right)}\left(j''\right)$ and consider the subpaths $P_{i,i_*} \subset P_{i,i'}$. The path $P_{i,i_*} \cup \{i_*,j',i'\} \cup \{i',j'',i''\}$ exists in $S$. The path $P_{i,i_*} \cup \{i_*,j',i'\} \cup \{i',j'',i''\}$ induces the following offers:\\
$v_{i'}\left(j',O_{j'}\right) > v_{i'}\left(j'',O_{j''}\right)$ on $i'$ \{from the assumption in the claim\}.\\
$v_{j''}\left(i',v_{i'}\left(j',O_{j'}\right)\right) < v_{j''}\left(i',v_{i'}\left(j'',O_{j''}\right)\right)=O_{j''}$ on $j''$.\\
$v_{i''}\left(j'',v_{j''}\left(i',v_{i'}\left(j',O_{j'}\right)\right)\right) > v_{i''}\left(j'',O_j''\right)=O_{i''}$ on ${i''}$.\\
This is a contradiction because by construction of $\bold{O}$, $O_{i''}$ was the maximum offer induced on $i''$ over all paths from $i$ to $i''$ given the offer $x$ on $i$. Hence, all paths from $i$ to $j'$ in $EQ\left(\bold{O}\right)$ include at least one node in $Nbr^{EQ\left(\bold{O}\right)}\left(j''\right)$.

We now prove the the fourth claim. Assume that there exists $\left(i',j'\right) \in E \setminus EQ\left(\bold{O}\right)$ and path $P_{i,i'}$ from $i$ to $i'$ in $S$ that does not include any of the nodes in $Nbr^{EQ\left(\bold{O}\right)}\left(j'\right)$. Then pick a node $i'' \in Nbr^{EQ\left(\bold{O}\right)}\left(j'\right)$ and consider the path $P_{i,i'} \cup \{i',j',i''\}$. Assume, $f^{P_{i,i'}}_{i,i'} > v_{i'}\left(j', O_{j'}\right)$. Then $v_{j'}\left(i',f^{P_{i,i'}}_{i,i'}\right) < O_{j'}$. The path $P_{i,i'} \cup \{i',j',i''\}$ induces offers $v_{j'}\left(i',f^{P_{i,i'}}_{i,i'}\right) < O_{j'}$ on $j''$ and $v_{i''}\left(j',v_{j'}\left(i',f^{P_{i,i'}}_{i,i'}\right)\right) > v_{i''}\left(j',O_{j'}\right) = O_{i''}$. This is a contradiction because by construction of $\bold{O}$, $O_{i''}$ was the maximum offer induced on $i''$ over all paths from $i$ to $i''$ given the offer $x$ on $i$. Hence the claims holds true.

\end{proof}


\subsection{Alternating Paths, Alternating Trees and Near-Perfect Matchings}

Given an offer profile $\bold{O}$ and a matching $M \subseteq EQ\left(\bold{O}\right)$, an {\bf alternating path} is a path within the equality subgraph $EQ\left(\bold{O}\right)$ with alternating pair of nodes share an edge in the matching and not in the matching $M$.

The matching also induces directionality on the alternating paths in the following way. Direct all edges $\left(i,j\right) \in M$ from $j$ to $i$ and all edges $\left(i,j\right) \notin M$ from $i$ to $j$.

An {\bf augmenting path} is an alternating path that starts and ends at an unmatched vertex.

An {\bf alternating tree} for a matching $M$ is a tree $T^M$ which contains exactly one unmatched node $r$ and has following properties:

\begin{itemize}

\item every node at odd distance from $r$ has degree 2 in the tree

\item all paths from $r$ are alternating paths

\item all leaf nodes are at even distance from $r$

\end{itemize}

Clearly, every alternating tree has one more node at even distance from $r$ than at odd distance from $r$.

A matching $M^{*}\left(\bold{O}\right) \subseteq EQ\left(\bold{O}\right)$ is a {\bf maximum matching} in the equality subgraph $EQ\left(\bold{O}\right)$ if $|M^{*}\left(\bold{O}\right)|=\max \{|M|:M \mbox{ is a matching in } EQ\left(\bold{O}\right)\}$.  A maximum matching $M^{*}\left(\bold{O}\right)$ can be obtained using the augmenting path algorithm \cite{Combinatorial-Optimization}. A matching $M$ is a maximum matching in $EQ\left(\bold{O}\right)$, if and only if there is no augmenting path in $EQ\left(\bold{O}\right)$ with respect to the matching $M$ \cite{Combinatorial-Optimization}.

A {\bf near-perfect matching} $M^{\circ}\left(\bold{O}\right) \subseteq EQ\left(\bold{O}\right)$ is a matching in the equality subgraph such that exactly one node is unmatched. A near-perfect matching exists only if $\||A|-|B|\|=1$.

Given a maximum matching $M^*\left(\bold{O}\right) \subseteq EQ\left(\bold{O}\right)$, an {\bf alternating forest} or a {\bf Hungarian forest} \cite{Paths-Trees-Flowers} $F^{M^*\left(\bold{O}\right)}$ is a collection of alternating trees rooted at nodes in $A$ induced by the matching. The number of alternating trees in a Hungarian forest is equal to the number of unmatched nodes in $A$. The Hungarian forest $F^{M^*\left(\bold{O}\right)}$ is the subgraph of $EQ\left(\bold{O}\right)$ induced by the set of nodes $X'$ reachable through alternating path from unmatched nodes in $A$.

Given a maximum matching $M^*\left(\bold{O}\right) \subseteq EQ\left(\bold{O}\right)$ and an alternating tree $T$, an {\bf expanding node} is a node $i' \in A^{T}$ with an edge with $j' \in B \setminus B^{T}$. We will refer $C^{T}$ to be the set of {\bf expanding nodes} for the tree $T$ and for each $i' \in C^{T}$, the respective {\bf expanding offer} $eo_{i'} = \max_{j' \in B \setminus B^{T}} {v_{i'}\left(j', O_{j'}\right)}$. We will also refer to $D^{T} = Nbr^E\left(C^{T}\right) \setminus B^{T}$ as the set of {\bf joining nodes} for the tree $T$.

An alternating tree $T^M$ for a matching $M$ is an {\bf alternating spanning tree} if it spans all the nodes in the network, i.e.- $X^{T^M} = A \cup B$.

An offer profile $\bold{O}$ is a {\bf stable alternating spanning tree generating offer profile} if $\bold{O}$ is stable and $EQ\left(\bold{O}\right)$ has a near perfect matching $M^{\circ}\left(\bold{O}\right)$ and associated alternating spanning tree $T^{M^{\circ}\left(\bold{O}\right)}$.


\begin{Lemma} \label{lem:neighbor-size}

Consider a set of nodes $A$ and $B$ with $|A|-|B|=1$ and a graph $S = \left(A \cup B, E\right)$ that has a near-perfect matching generating an alternating spanning tree. Then:

\begin{itemize}

\item Every $B' \subseteq B$ has least $|B'|+1$ neighboring nodes in $A$.

\item Every $A' \subset A$ has at least $|A'|$ neighboring nodes in $B$.

\end{itemize}

\end{Lemma}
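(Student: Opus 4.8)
The plan is to exploit the structure of the alternating spanning tree. First I would pin down which node is unmatched and where the tree is rooted. Since $|A| = |B| + 1$, any matching covers at most $|B|$ nodes of $A$, so a near-perfect matching (exactly one unmatched node) must match every node of $B$ to a distinct node of $A$, leaving exactly one node $r \in A$ unmatched; this $r$ is the root of the alternating spanning tree $T$. Because $S$ is bipartite with $r \in A$, the nodes of $A$ sit at even depth and the nodes of $B$ at odd depth in $T$. By the defining property of an alternating tree each $B$-node, being at odd depth, has degree exactly $2$ in $T$, with both tree-neighbors in $A$: a parent reached by a non-matching edge and a child reached by a matching edge. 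Consequently the matching induces a bijection $\mu$ from $B$ onto $A \setminus \{r\}$, where $\mu(b)$ is the tree-child of $b$, and symmetrically every $a \in A \setminus \{r\}$ has tree-parent $\mu^{-1}(a) \in B$.

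For the first bullet, fix a nonempty $B' \subseteq B$ and let $N_T(B') \subseteq A$ denote the set of tree-neighbors of nodes in $B'$. I would count the edges of the forest obtained by restricting $T$ to the vertex set $B' \cup N_T(B')$ and keeping only the tree edges incident to $B'$. Since every $b \in B'$ has tree-degree $2$ and both its neighbors lie in $N_T(B')$ by definition, and since each such edge has a unique endpoint in $B'$, this forest has exactly $2|B'|$ distinct edges. A forest on $v \ge 1$ vertices has at most $v-1$ edges, so $2|B'| \le |B'| + |N_T(B')| - 1$, which rearranges to $|N_T(B')| \ge |B'| + 1$. As $N_T(B') \subseteq Nbr(B')$, the claim follows.

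For the second bullet, fix a nonempty proper $A' \subset A$. The matched neighbors $\{\mu^{-1}(a) : a \in A' \setminus \{r\}\}$ are $|A' \setminus \{r\}|$ distinct nodes of $B$, all lying in $Nbr(A')$; if $r \notin A'$ this already yields $|A'|$ neighbors, so the entire difficulty is to produce one extra neighbor when $r \in A'$. Here I would use properness: pick $a^* \in A \setminus A'$ and walk the alternating tree path from $r$ to $a^*$, which alternates $A$- and $B$-nodes. Since the path starts inside $A'$ (at $r$) and ends outside $A'$ (at $a^*$), there is a first crossing from an $A$-node $a_{m-1} \in A'$ to the next $A$-node $a_m \notin A'$; let $b_m$ be the $B$-node between them. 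Then $b_m \in Nbr(a_{m-1}) \subseteq Nbr(A')$, while $b_m$ is matched to its tree-child $a_m \notin A'$, so $b_m = \mu^{-1}(a_m)$ differs from every $\mu^{-1}(a)$ with $a \in A'$. This supplies the missing neighbor, giving $|Nbr(A')| \ge |A'|$.

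The main obstacle in both parts is extracting the extra unit beyond the naive matching bounds of $|B'|$ and $|A'|-1$. The slack in the first claim comes from a global acyclicity (Euler-type) count on the tree, while the slack in the second comes from a local argument along a single tree path leaving $A'$; recognizing that the root's special status and the tree's connectivity are exactly what furnish these extra units is the crux. I would also flag the trivial exclusions implicit in the statement: the empty set must be barred from the first claim (there $|B'|+1 = 1 > 0$), and $A' = A$ must be barred from the second (consistent with the proper inclusion $A' \subset A$), since $|Nbr(A)| \le |B| = |A| - 1$.
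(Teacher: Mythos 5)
Your proof is correct, but it departs from the paper's in both halves, more substantially in the second. For the first claim the paper argues pointwise: each $b \in B'$ is matched to a distinct child in $A$, and at least one parent of a node of $B'$ must lie outside this set of children, since otherwise following parent pointers within $B'$ would close a cycle in the tree; your forest edge count ($2|B'|$ distinct tree edges on the vertex set $B' \cup N_T(B')$, acyclicity forcing $2|B'| \le |B'| + |N_T(B')| - 1$) packages the same acyclicity into a single inequality and is arguably cleaner and more self-contained (the paper's invocation of Hall's theorem here is in fact unnecessary, as the matched children already give $|B'|$ distinct neighbors). For the second claim the difference is genuine: the paper re-roots the matching by switching matched and unmatched edges along the alternating path from the root $r$ to a chosen node $i' \notin A'$, producing a new near-perfect matching in which every node of $A'$ is matched to a distinct neighbor; you instead keep the matching fixed and exhibit the missing neighbor explicitly as the $B$-node at the first crossing of the tree path from $r$ out of $A'$, which is matched to a node outside $A'$ and hence cannot coincide with any $\mu^{-1}(a)$, $a \in A'$. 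Your version avoids having to verify that the switch yields a legitimate near-perfect matching with an alternating spanning tree (a fact the paper asserts without proof), at the price of a slightly longer local argument; the paper's re-rooting trick, on the other hand, is the standard augmenting-path manipulation that recurs throughout the rest of the paper. Your flagging of the degenerate cases is also apt: the first bullet is literally false for $B' = \emptyset$ as stated, and the properness $A' \subset A$ is exactly what the second bullet needs, neither of which the paper remarks on.
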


\begin{proof}

Pick any subset $B' \subseteq B$. Since all nodes in $B$ are matched in a near-perfect matching, then from the Hall's theorem \cite{Halls-Theorem}, $B'$ has edges to at least $|B'|$ nodes in $A$. Since all nodes in $B'$ are interior nodes of an alternating tree, therefore, each node in $B'$ has one unique child it is matched to and one parent it is not matched to. Clearly, there is one parent node different from all the child nodes, or else, there will be a loop in the alternating tree. Hence, $B'$ has edges to at least $|B'|+1$ nodes in $A$ in the alternating tree within the network $S$.

For the second claim, pick any $A' \subset A$ and a near-perfect matching $M$ with an alternating tree $T$. If $A'$ does not contain the root of the alternating tree, then all nodes in $A'$ are matched and hence, the number of neighbors of the set $A'$ must be at least $|A|$. If $A'$ contains the root of the alternating tree, then pick a node $i'$ not in $A'$ and change the matching $M$ by switching the edges within the matching with the edges outside the matching along the alternating path from the root of $T$ to $i'$. This creates a new near-perfect matching and an alternating spanning tree whose root is at $i'$. For this matching, all the nodes in $A'$ are matched and hence the claim follows.

\end{proof}

\begin{proposition}

Consider a set of nodes $A$ and $B$ with $|A|-|B|=1$ and an equality subgraph $EQ\left(\bold{O}\right)$ that is connected. Then $EQ\left(\bold{O}\right)$ has a near-perfect matching if $\exists \mbox { a tree } T \subseteq EQ\left(\bold{O}\right)$ with the following properties:

\begin{enumerate}

\item $\forall j \in B$, $j$ is not a leaf node in $T$.

\item $\forall j \in B$, $j$ has exactly one child node in $T$.

\end{enumerate}

\end{proposition}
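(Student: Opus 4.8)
The plan is to build a near-perfect matching directly from $T$: match every node of $B$ to its unique child in $T$, and then verify by a simple count that exactly one node is left unmatched.

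First I would extract from the hypotheses everything needed to make this construction legitimate. Property 2 requires each $j \in B$ to have exactly one child in $T$, and a node can only have a child if it itself belongs to $T$; hence $B \subseteq X^T$ and the child of each $j \in B$ is well defined. Since $T \subseteq EQ(\bold{O}) \subseteq E \subseteq A \times B$ is bipartite, the child $c(j)$ of any $j \in B$ lies in $A$, giving a map $c : B \to A$. Property 1 rules out the degenerate configuration in which the root lies in $B$ with a single incident edge, keeping the child-and-leaf terminology relative to the rooting of $T$ unambiguous.

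The crux is to show that $c$ is injective, for then $M = \{(c(j), j) : j \in B\}$ is a genuine matching. Injectivity is precisely the defining feature of a rooted tree: each node has a unique parent, so if $c(j_1) = c(j_2)$ then that common node would have both $j_1$ and $j_2$ as its parent, forcing $j_1 = j_2$. Hence no two edges of $M$ share their $A$-endpoint, each $j \in B$ lies in exactly one edge of $M$, and every edge of $M$ lies in $T \subseteq EQ(\bold{O})$; so $M$ is a matching of $EQ(\bold{O})$ that saturates $B$.

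It then remains only to count. The matching $M$ has $|B|$ edges and therefore covers $2|B|$ of the $|A| + |B| = 2|B| + 1$ nodes, leaving exactly $|A| - |B| = 1$ node unmatched, which by definition makes $M$ near-perfect. I expect the only delicate point to be the well-definedness and injectivity of the child map --- confirming that Properties 1 and 2 really place all of $B$ inside $T$ with a single child apiece --- after which both the matching and the final tally are immediate. In particular, connectedness of $EQ(\bold{O})$ is not needed for this direction beyond guaranteeing that a tree $T$ with the stated properties can be exhibited.
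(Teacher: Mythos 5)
Your proof is correct, and it takes a genuinely different route from the paper's. The paper argues via Hall's theorem: it asserts (with ``clearly'') that every $B' \subseteq B$ has strictly more than $|B'|$ neighbors in $T$, invokes Hall's theorem to obtain a matching saturating $B$, and gets near-perfection from the node count $|A|-|B|=1$. You instead bypass Hall entirely and construct the matching explicitly: Property 2 makes the child map $c : B \to A$ well defined (and forces $B \subseteq X^T$), uniqueness of parents in a rooted tree makes $c$ injective, so $M=\{\left(c(j),j\right) : j \in B\}$ is a matching in $T \subseteq EQ\left(\bold{O}\right)$ saturating $B$, and the same count finishes. Your route is more elementary and makes explicit the combinatorial content that the paper's ``clearly'' conceals: injectivity of the child map is exactly what supplies the $|B'|$ distinct neighbors needed for Hall's condition, while the paper's strict inequality (which additionally requires a parent outside the child set) is more than Hall's theorem needs and is sensitive to where $T$ is rooted, whereas your construction is indifferent to the rooting. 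What the paper's approach buys is brevity and uniformity with its other arguments --- Hall's theorem is already invoked in Lemma \ref{lem:neighbor-size} --- at the cost of leaving the verification of Hall's hypothesis implicit. Your closing observation is also accurate: neither argument uses connectedness of $EQ\left(\bold{O}\right)$ beyond the assumed existence of $T$.
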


\begin{proof}

Clearly $\forall B' \subseteq B$, $| \cup_{j \in B'} Nbr(j)|>|B'|$. Hence following Hall's theorem, there exists a matching $M^\circ$ in $T$, such that all of $B$ is matched. Therefore, $M^\circ$ is a near-perfect matching.

\end{proof}



\section{Stable Alternating Spanning Tree Generating Offer Profiles}

In this section, we introduce three main lemmas about the existence, uniqueness and strict monotonicity of the stable alternating spanning tree generating offer profiles. Using this, we prove the main theorem of this section that helps extend the Hungarian algorithm to find the generalized stable matching. The main theorem introduces a set of continuous and strictly monotonic offer generating functions for each pair of nodes in the bipartite network.

\begin{Lemma} \label{lem:Stbl-Ineq}

Assume there exists two stable offer profiles $\bold{O}^1$ and $\bold{O}^2$.
\begin{itemize}
\item If $i \in A$ with $O^1_i \leq O^2_i$ and $(i,j) \in EQ\left(\bold{O}^2\right)$, then $O^1_j \geq O^2_j$.
\item If $i \in A$ with $O^1_i \geq O^2_i$ and $(i,j) \in EQ\left(\bold{O}^1\right)$, then $O^1_j \leq O^2_j$.
\item If $j \in B$ with $O^1_j \leq O^2_j$ and $(i,j) \in EQ\left(\bold{O}^2\right)$, then $O^1_i \geq O^2_i$.
\item If $j \in B$ with $O^1_j \geq O^2_j$ and $(i,j) \in EQ\left(\bold{O}^1\right)$, then $O^1_i \leq O^2_i$.
\end{itemize}

The resulting inequalities are strict when the conditioning inequalities are strict.

\end{Lemma}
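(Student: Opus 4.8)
The plan is to dispatch all four bullets with essentially one two-line computation apiece, resting on two facts about the pareto payoff functions: each $v_{j}\left(i,\cdot\right)$ is strictly decreasing, and they satisfy the involution identity $v_{i}\left(j, v_{j}\left(i, x\right)\right) = x$, which follows directly from the definition $v_{j}\left(i,v_{i}\right) = u_{j}\left(i,w\left(i,j\right)-u_{i}^{-1}\left(j,v_{i}\right)\right)$. This identity is what makes both equality-subgraph membership and the stability condition symmetric in the two endpoints of an edge.

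First I would record these symmetries. For an edge $\left(i,j\right)$, the relation $O_{j} = v_{j}\left(i, O_{i}\right)$ is equivalent to $O_{i} = v_{i}\left(j, O_{j}\right)$, so the membership $\left(i,j\right) \in EQ\left(\bold{O}\right)$ may be read from either endpoint. Likewise, applying the strictly decreasing map $v_{i}\left(j,\cdot\right)$ to the stability inequality $O_{j} \geq v_{j}\left(i, O_{i}\right)$ and invoking the identity yields $O_{i} \geq v_{i}\left(j, O_{j}\right)$; hence a stable profile satisfies the stability inequality in both directions across every edge, even though stability was only stated as $O_{j} \geq v_{j}\left(i, O_{i}\right)$.

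Then for the first bullet I would argue as follows. Since $\left(i,j\right) \in EQ\left(\bold{O}^2\right)$ we have $O^{2}_{j} = v_{j}\left(i, O^{2}_{i}\right)$; since $\bold{O}^1$ is stable we have $O^{1}_{j} \geq v_{j}\left(i, O^{1}_{i}\right)$; and since $v_{j}\left(i,\cdot\right)$ is strictly decreasing with $O^{1}_{i} \leq O^{2}_{i}$ we get $v_{j}\left(i, O^{1}_{i}\right) \geq v_{j}\left(i, O^{2}_{i}\right) = O^{2}_{j}$. Chaining these gives $O^{1}_{j} \geq O^{2}_{j}$, and the middle step is strict exactly when $O^{1}_{i} < O^{2}_{i}$, which delivers the strict conclusion. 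The second bullet is the mirror image, with the equality now coming from $EQ\left(\bold{O}^1\right)$ and the inequality from stability of $\bold{O}^2$. The third and fourth bullets are the identical argument with the roles of $A$ and $B$ interchanged, which is legitimate precisely because of the endpoint symmetries established above.

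The only point requiring care is the involution identity, since without it the $B$-node statements would not reduce to the $A$-node ones and the stability condition could not be applied in the reverse direction. Beyond that, everything is a direct consequence of strict monotonicity, so I expect no genuine obstacle and no case analysis beyond the four symmetric variants.
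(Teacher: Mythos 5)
Your proof is correct, and for the first bullet it is the same argument the paper gives: combine $(i,j) \in EQ\left(\bold{O}^2\right)$, stability of $\bold{O}^1$, and strict monotonicity of $v_{j}\left(i,\cdot\right)$ into one chain of inequalities; whether this is phrased directly (as you do) or as a contradiction (as the paper does) is immaterial, and both yield strictness from strictness. The genuine difference lies in the remaining bullets. The paper dismisses them with ``the rest follow similarly,'' but that is not literally true for the two $B$-node bullets: the paper defines stability asymmetrically, requiring only $O_{j} \geq v_{j}\left(i, O_{i}\right)$, and states equality-subgraph membership from the $A$ side as well, so the $B$-node cases need the fact that both conditions can be read from either endpoint. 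Your explicit verification of the involution identity $v_{i}\left(j, v_{j}\left(i, x\right)\right) = x$ (which indeed follows by unwinding the definition of the pareto payoff functions), together with its consequences that $O_{j} = v_{j}\left(i, O_{i}\right)$ is equivalent to $O_{i} = v_{i}\left(j, O_{j}\right)$ and that stability implies $O_{i} \geq v_{i}\left(j, O_{j}\right)$, is precisely the step needed to make that symmetry rigorous. So your write-up is the paper's argument with the implicit symmetry actually proved; it is a completion of the paper's proof rather than a detour, and it is the more careful of the two.
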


\begin{proof}

We only need to prove the first statement and the rest follow similarly. To prove the first inequality, assume that  $O^1_{j} < O^2_{j}$. Then, $O^1_{j} < O^2_{j} = v_{j}\left(i, O^2_{i}\right) \leq v_{j}\left(i,O^1_{i}\right)$. The second inequality is due to the strict monotonicity of pareto payoff function. This implies that $\bold{O}^1$ is not stable contradicting our assumption. Hence, by contradiction, the first statement is true. 

\end{proof}

\begin{Lemma} \label{lem:Uniqueness-Offer}

Assume $|A|-|B|=1$. Pick any $i \in A$ and offer $o_i$. Assume there exists a stable alternating spanning tree generating offer profile $\bold{O}$ with $O_i=o_i$. Then $\bold{O}$ is the unique stable alternating spanning tree generating offer profile with $O_i=o_i$.

\end{Lemma}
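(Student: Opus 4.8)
The plan is to argue by contradiction. Suppose $\bold O^1$ and $\bold O^2$ are two stable alternating spanning tree generating offer profiles with $O^1_i=O^2_i=o_i$ but $\bold O^1\neq\bold O^2$. The first thing I would record is a rigidity observation: each such profile is completely determined by its spanning tree together with the single value $o_i$. Indeed, every edge of the alternating spanning tree lies in the equality subgraph, so the exact relation $O_j=v_j\left(i',O_{i'}\right)$ holds along each tree edge; since the tree is connected and spanning, propagating these exact relations outward from $i$ (inverting the pareto payoff functions, which is legitimate by their strict monotonicity and continuity) fixes the offer at every node. Hence $\bold O^1\neq\bold O^2$ forces the two spanning trees to differ, and the task reduces to showing that two distinct spanning equality trees anchored at the same $o_i$ cannot both carry stable profiles.

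Next I would set up the comparison using the lattice structure of stable profiles. Define $\bar O_{i'}=\max\left(O^1_{i'},O^2_{i'}\right)$ on $A$ and $\bar O_j=\min\left(O^1_j,O^2_j\right)$ on $B$, and symmetrically $\underline O_{i'}=\min\left(O^1_{i'},O^2_{i'}\right)$, $\underline O_j=\max\left(O^1_j,O^2_j\right)$. A direct check shows $\bar{\bold O}$ is again stable: for an edge $\left(i',j\right)$ with $\bar O_j=O^k_j$ we have $\bar O_{i'}\geq O^k_{i'}$, so $v_j\left(i',\bar O_{i'}\right)\leq v_j\left(i',O^k_{i'}\right)\leq O^k_j=\bar O_j$, the last inequality being stability of $\bold O^k$. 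The same argument gives stability of $\underline{\bold O}$, and both satisfy $\bar O_i=\underline O_i=o_i$. If I can prove $\bar{\bold O}=\bold O^1=\underline{\bold O}$, then on $A$ the value $O^2_{i'}$ is squeezed between $\underline O_{i'}=O^1_{i'}$ and $\bar O_{i'}=O^1_{i'}$, forcing $\bold O^2=\bold O^1$ and the contradiction.

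The core step is a squeezing induction along the spanning tree $T^1$ of $\bold O^1$, whose edges are tight for $\bold O^1$. The easy half runs as follows: if $\bar O_{i'}=O^1_{i'}$ at an $A$-node $i'$ and $\left(i',j\right)\in T^1\subseteq EQ\left(\bold O^1\right)$, then stability of $\bar{\bold O}$ gives $\bar O_j\geq v_j\left(i',\bar O_{i'}\right)=v_j\left(i',O^1_{i'}\right)=O^1_j$, while $\bar O_j=\min\left(O^1_j,O^2_j\right)\leq O^1_j$; hence $\bar O_j=O^1_j$. Thus a pinned $A$-node pins each of its $B$-neighbours in the tree. Lemma \ref{lem:Stbl-Ineq} supplies the complementary monotone comparisons that I would use to transfer pinning across the matching edges, and the observation that any alternating spanning tree generating profile lies below the maximum-offer-path profile of Lemma \ref{lem:conn-stbl} on $A$ and above it on $B$ (because $O_{i'}=f^{P}_{i,i'}\left(o_i\right)\leq\max_P f^P_{i,i'}\left(o_i\right)$ along the tree path $P$, and the reverse bound then follows for $B$ from stability) provides two-sided control of both profiles.

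The step I expect to be the main obstacle is exactly the reverse propagation, from a pinned $B$-node back to its $A$-neighbours: there the stability inequality and the lattice bound point in the \emph{same} direction, so the single inequality per edge that stability provides does not by itself close the induction. This is precisely where the full alternating spanning tree and near-perfect matching structure must enter. Since every $B$-node has exactly one matched child, the induction must be pushed through the matching edges, and the Hall-type neighbour counts of Lemma \ref{lem:neighbor-size} (every $B'\subseteq B$ meets at least $\lvert B'\rvert+1$ nodes of $A$, every proper $A'\subset A$ meets at least $\lvert A'\rvert$ nodes of $B$) constrain how the pinned region can terminate, ruling out a proper nonempty unpinned remainder. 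Carrying this out forces the pinned region to exhaust the spanning tree, yielding $\bar{\bold O}=\bold O^1$; running the parallel argument (pinning $A$-nodes first, which is the easy direction for $\underline{\bold O}$) gives $\underline{\bold O}=\bold O^1$, and the squeeze above then produces $\bold O^1=\bold O^2$, the desired contradiction.
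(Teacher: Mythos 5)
Your setup is sound as far as it goes: the rigidity remark (a stable alternating spanning tree generating profile is determined by its tree and the value $o_i$, since tree edges are tight and the pareto payoff functions are invertible) is correct, the pointwise $\max$-on-$A$/$\min$-on-$B$ profile $\bar{\bold{O}}$ and its counterpart $\underline{\bold{O}}$ are indeed stable by the one-line check you give, and your ``easy half'' of the propagation (a pinned $A$-node pins its $B$-neighbours across edges of $T^1$) is exactly the second bullet of Lemma~\ref{lem:Stbl-Ineq}. But the proof stops precisely where the lemma's content lies. You flag the reverse step (pinned $B$-node back to its $A$-neighbours) as the main obstacle and then assert that the matching structure plus the Hall-type counts of Lemma~\ref{lem:neighbor-size} ``rule out a proper nonempty unpinned remainder,'' with no argument. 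As described, the plan cannot be completed, because every tool you invoke in the core step is internal to the single tree $T^1$, i.e.\ to edges tight for $\bold{O}^1$. On such edges, Lemma~\ref{lem:Stbl-Ineq} yields only one containment: the $T^1$-neighbours of the unpinned set $\{j \in B : O^1_j > O^2_j\}$ lie in the unpinned set $\{i' \in A : O^1_{i'} < O^2_{i'}\}$. Feeding this into the tree count gives $|\{i' : O^1_{i'}<O^2_{i'}\}| \geq |\{j : O^1_j>O^2_j\}|+1$, which is perfectly self-consistent -- nothing contradicts it. The complementary containment, the one that actually bounds the unpinned $A$-side, requires edges known to be tight for $\bold{O}^2$, that is, the \emph{second} profile's tree $T^2$, which your argument never brings into play; stability of $\bold{O}^2$ alone points the inequality the wrong way on $T^1$-edges.

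This two-tree interplay is exactly how the paper closes the argument, and it is the fix your proof needs. Partition the nodes by the sign of $O^1-O^2$ and apply Lemma~\ref{lem:Stbl-Ineq} \emph{twice}: in $T^1 \subseteq EQ\left(\bold{O}^1\right)$, the parents and children of $\{i' : O^1_{i'}>O^2_{i'}\}$ lie in $\{j : O^1_j<O^2_j\}$; in $T^2 \subseteq EQ\left(\bold{O}^2\right)$, the parents and children of $\{j : O^1_j<O^2_j\}$ lie in $\{i' : O^1_{i'}>O^2_{i'}\}$. Now count using the alternating-tree structure (every $B$-node has exactly one child, every non-root node exactly one parent, and the root $i$ lies in neither set since $O^1_i=O^2_i$): the first containment gives $|\{j : O^1_j<O^2_j\}| \geq |\{i' : O^1_{i'}>O^2_{i'}\}|$, while the second gives $|\{i' : O^1_{i'}>O^2_{i'}\}| \geq |\{j : O^1_j<O^2_j\}|+1$ -- the acyclicity count of Lemma~\ref{lem:neighbor-size}, but applied inside $T^2$, not inside $T^1$ or $S$. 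These are contradictory, so both sets are empty; the symmetric pair of containments (reading the roles of the trees the other way) empties the two sets with opposite signs, giving $\bold{O}^1=\bold{O}^2$ directly, with no squeeze needed. In short, your lattice scaffolding is fine but dispensable; the missing idea is that uniqueness here is a two-tree counting argument, with one containment harvested from each profile's equality subgraph.
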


\begin{proof}

Assume there exists two stable alternating spanning tree generating offer profiles $\bold{O}^1$ and $\bold{O}^2$ with $O^1_i=O^2_i=o_i$. Then $EQ\left(\bold{O}^1\right) \neq EQ\left(\bold{O}^2\right)$ or else $\bold{O}^1=\bold{O}^2$. Let $M^{\circ}\left(\bold{O}^1\right)$ and $M^{\circ}\left(\bold{O}^2\right)$ be the associated near perfect matchings and $T^{M^{\circ}\left(\bold{O}^1\right)}$ and $T^{M^{\circ}\left(\bold{O}^2\right)}$ be the associated alternating spanning trees. Clearly $T^{M^{\circ}\left(\bold{O}^1\right)} \neq T^{M^{\circ}\left(\bold{O}^2\right)}$ or else $\bold{O}^1=\bold{O}^2$. Without loss of generality assume that $i$ is unmatched in both $M^{\circ}\left(\bold{O}^1\right)$ and $M^{\circ}\left(\bold{O}^2\right)$. In both alternating spanning trees, the nodes at even distances from $i$ belong to $A$ and the nodes at odd distances from $i$ belong to $B$. Define
\begin{align*}
&AM_i=\{i' \in A ~:~ O^1_{i'}=O^2_{i'}\} \mbox{ and } BM_i=\{j' \in B ~:~ O^1_{j'}=O^2_{j'}\}\\
&AM^1_i=\{i' \in A ~:~ O^1_{i'}>O^2_{i'}\} \mbox{ and } BM^1_i=\{j' \in B ~:~ O^1_{j'}>O^2_{j'}\}\\
&AM^2_i=\{i' \in A ~:~ O^1_{i'}<O^2_{i'}\} \mbox{ and } BM^2_i=\{j' \in B ~:~ O^1_{j'}<O^2_{j'}\}
\end{align*}
Since both offer profiles are stable, therefore following the lemma \ref{lem:Stbl-Ineq}:

\begin{enumerate}

\item In the alternating tree $T^{M^{\circ}\left(\bold{O}^1\right)}$, the parents and children of nodes in $AM^1_i$ must belong to $BM^2_i$
\item In the alternating tree $T^{M^{\circ}\left(\bold{O}^1\right)}$, the parents and children of nodes in $BM^1_i$ must belong to $AM^2_i$
\item In the alternating tree $T^{M^{\circ}\left(\bold{O}^2\right)}$, the parents and children of nodes in $AM^2_i$ must belong to $BM^1_i$
\item In the alternating tree $T^{M^{\circ}\left(\bold{O}^2\right)}$, the parents and children of nodes in $BM^2_i$ must belong to $AM^1_i$

\end{enumerate}

Since, the nodes at odd distance in the alternating trees have exactly one child, and all nodes have exactly one parent, therefore:

\begin{enumerate}

\item Using 1, we have $|BM^2_i| \geq |AM^1_i|$ and from 4, we have $|AM^1_i| \geq |BM^2_i|+1$. This gives a contradiction $|BM^2_i| \geq |AM^1_i| \geq |BM^2_i|+1 > |BM^2_i|$.
\item Using 2, we have $|AM^2_i| \geq |BM^1_i|+1$ and from 3, we have $|BM^1_i| \geq |AM^2_i|$. This gives a contradiction $|AM^2_i| \geq |BM^1_i|+1 > |BM^1_i| > |AM^2_i|$.

\end{enumerate}

Therefore all the sets $AM^1_i, AM^2_i, BM^1_i, BM^2_i$ are empty which implies that $\bold{O}^1=\bold{O}^2$.

\end{proof}

\begin{Lemma} \label{lem:offer-monotonicity}

Assume $|A|-|B|=1$. Pick any $i \in A$ and assume that for all $o_i \leq c_i$ there exists a stable alternating spanning tree generating offer profile $\bold{O}$ with $O_i=o_i$. Pick any two stable alternating spanning tree generating offer profiles $\bold{O}^1$ and $\bold{O}^2$ with $O^1_i \leq c_i$ and $O^2_i \leq c_i$. If $O^1_i<O^2_i$, then $\forall i' \in A$, $O^1_{i'}<O^2_{i'}$ and $\forall j' \in B$, $O^1_{j'}>O^2_{j'}$.

\end{Lemma}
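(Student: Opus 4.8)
The plan is to mirror the counting argument of Lemma~\ref{lem:Uniqueness-Offer}, but to run it in two stages so as to extract \emph{strict} inequalities rather than a forced collapse. First I would reduce to the case where $i$ is the root of both alternating spanning trees: given the near-perfect matchings $M^{\circ}\left(\bold{O}^1\right),M^{\circ}\left(\bold{O}^2\right)$ and their spanning trees $T^1,T^2$, augmenting along the alternating path from the current (unmatched) root to $i$ produces new near-perfect matchings in which $i$ is unmatched, without changing either offer profile or either equality subgraph. Hence, without loss of generality, $i$ is the unmatched root of both $T^1$ and $T^2$, every node of $B$ sits at odd depth with exactly one child, and every non-root node has exactly one parent. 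Following the uniqueness proof I would partition $A$ into $AM^1_i,AM^2_i,AM_i$ and $B$ into $BM^1_i,BM^2_i,BM_i$ according to the sign of $O^1_{\cdot}-O^2_{\cdot}$; note $i\in AM^2_i$ since $O^1_i<O^2_i$. The goal is to show $AM^2_i=A$ and $BM^1_i=B$, i.e. that the four remaining sets are empty. Throughout I would invoke Lemma~\ref{lem:Stbl-Ineq}, whose strict versions give four cross-implications: in $T^2$ the tree-neighbors of $AM^2_i$ lie in $BM^1_i$ and those of $BM^2_i$ lie in $AM^1_i$, while in $T^1$ the tree-neighbors of $AM^1_i$ lie in $BM^2_i$ and those of $BM^1_i$ lie in $AM^2_i$.

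The first stage eliminates the ``wrong direction'' sets $AM^1_i$ and $BM^2_i$. Because each node of $AM^1_i$ has a parent in $T^1$ (the root $i$ is not in $AM^1_i$), and because distinct $A$-nodes have distinct parents (each $B$-node has a unique child), the parent map embeds $AM^1_i$ into $BM^2_i$, giving $|AM^1_i|\le|BM^2_i|$. In the opposite direction I would use a forest-counting bound in $T^2$: the edges of $T^2$ incident to $BM^2_i$ form a forest on a vertex set $BM^2_i\cup N$ with $N\subseteq AM^1_i$, in which every node of $BM^2_i$ has degree two; since a forest has at least one more vertex than edge, its $2|BM^2_i|$ edges on $|BM^2_i|+|N|$ vertices force $|N|\ge|BM^2_i|+1$, hence $|AM^1_i|\ge|BM^2_i|+1$ whenever $BM^2_i\neq\emptyset$. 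The two bounds are incompatible unless $BM^2_i=\emptyset$, whereupon $|AM^1_i|\le|BM^2_i|=0$ forces $AM^1_i=\emptyset$ as well.

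The second stage, which I expect to be the crux, eliminates the equality sets $AM_i$ and $BM_i$; this is where strictness, rather than mere monotonicity, is actually won, and where a naive local propagation along a single tree stalls (an equality $A$-node can be adjacent in $T^2$ to a strictly perturbed $B$-node with no contradiction available from $EQ\left(\bold{O}^2\right)$ alone). Having disposed of $AM^1_i$ and $BM^2_i$, the non-strict half of Lemma~\ref{lem:Stbl-Ineq} now pins the equality sets down: in $T^1$ the tree-neighbors of $AM_i$ must lie in $BM^2_i\cup BM_i=BM_i$, and in $T^2$ the tree-neighbors of $BM_i$ must lie in $AM^1_i\cup AM_i=AM_i$. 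These are precisely the hypotheses needed to replay the stage-one scheme on the pair $\left(AM_i,BM_i\right)$: the parent map in $T^1$ gives $|AM_i|\le|BM_i|$ (again using $i\notin AM_i$), and the forest count in $T^2$ gives $|AM_i|\ge|BM_i|+1$ when $BM_i\neq\emptyset$, forcing $BM_i=\emptyset$ and then $AM_i=\emptyset$. Therefore $A=AM^2_i$ and $B=BM^1_i$, which is the claim.

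The main obstacle is this second stage: the equality sets are not self-contained under either equality subgraph individually, so one must play the two trees against each other and lean on the fact that the root $i$ is \emph{strictly} in $AM^2_i$. That strictness is exactly what keeps $i$ out of both $AM^1_i$ and $AM_i$, making the two parent maps total and hence the inequalities $|AM^1_i|\le|BM^2_i|$ and $|AM_i|\le|BM_i|$ valid. The one technical device worth isolating is the forest-counting inequality: for a set $S$ of degree-two nodes of a tree all of whose tree-neighbors lie in a disjoint set $N$, the edges incident to $S$ form a forest with $2|S|$ edges, so $|N|\ge|S|+1$ as soon as $S\neq\emptyset$. I would state and prove this small fact once and apply it in both stages.
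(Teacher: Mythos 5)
Your proof is correct, and its skeleton --- the cross-implications of Lemma \ref{lem:Stbl-Ineq} applied to tree-neighbors in $T^1$ and $T^2$, followed by a parent-map injection in one tree against a ``one extra node'' count in the other --- is exactly the paper's. The one substantive difference is how the equality sets are killed. The paper dispatches them in a single line: if $O^1_{i'}=O^2_{i'}$ for any $i'\in A$, then Lemma \ref{lem:Uniqueness-Offer} applied at $i'$ forces $\bold{O}^1=\bold{O}^2$, contradicting $O^1_i<O^2_i$; it then runs only your stage one, on the sets $AM^{'}_i=\{i':O^1_{i'}>O^2_{i'}\}$ and $BM^{'}_i=\{j':O^1_{j'}<O^2_{j'}\}$. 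Your stage two re-derives by hand what that appeal to uniqueness compresses, so your argument is longer but self-contained: it never invokes Lemma \ref{lem:Uniqueness-Offer}, only Lemma \ref{lem:Stbl-Ineq} and tree counting (indeed it essentially re-proves uniqueness as a by-product). Two details of yours are genuine improvements in rigor over the paper's write-up. First, the paper's count ``all nodes have exactly one parent'' fails at the root of $T^1$; it is valid only after the without-loss-of-generality re-rooting at $i$ that the uniqueness proof states but the proof of Lemma \ref{lem:offer-monotonicity} does not restate --- you make this re-rooting explicit and justify that it changes neither the offer profiles nor the equality subgraphs. Second, your isolated forest-counting inequality is the honest form of the paper's implicit ``the parent is distinct from all the children, else there is a loop'' step, and your conclusion $B=BM^1_i$ yields the strict inequalities on $B$ directly, whereas the paper's conclusion for $B$ silently needs one further strict application of Lemma \ref{lem:Stbl-Ineq} along the matching edges of $M^{\circ}\left(\bold{O}^2\right)$.
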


\begin{proof}

Clearly, there is no $i' \in A$, with $O^1_{i'}=O^2_{i'}$, otherwise by lemma \ref{lem:Uniqueness-Offer} $o^1_i=o^2_i$. Define
\begin{align*}
&AM^{'}_i=\{i' \in A ~:~ O^1_{i'}>O^2_{i'}\} \mbox{ and } BM^{'}_i=\{j' \in B ~:~ O^1_{j'}<O^2_{j'}\}
\end{align*}
Since both offer profiles are stable, therefore following the lemma \ref{lem:Stbl-Ineq}:

\begin{enumerate}

\item In the alternating tree $T^{M^{\circ}\left(\bold{O}^1\right)}$, the parents and children of nodes in $AM^{'}_i$ must belong to $BM^{'}_i$.
\item In the alternating tree $T^{M^{\circ}\left(\bold{O}^2\right)}$, the parents and children of nodes in $BM^{'}_i$ must belong to $AM^{'}_i$.

\end{enumerate}

Since, the nodes at odd distance in the alternating trees have exactly one child, and all nodes have exactly one parent, therefore using 1, we have $|BM^{'}_i| \geq |AM^{'}_i|$ and from 2, we have $|AM^{'}_i| \geq |BM^{'}_i|+1$. This gives a contradiction $|BM^{'}_i| \geq |AM^{'}_i| \geq |BM^{'}_i|+1 > |BM^{'}_i|$. Therefore, we have $AM^{'}_i$ and $BM^{'}_i$ empty which implies that $\forall i' \in A$, $O^1_{i'}<O^2_{i'}$ and $\forall j' \in B$, $O^1_{j'}>O^2_{j'}$.

\end{proof}

\begin{Lemma} \label{lem:existence-stable-spanning-offer-profile}

Assume $|A|-|B|=1$ and assume there exists a stable alternating spanning tree generating offer profile $\bold{O}$. Then $\forall i \in A$, and $o_i \leq O_i$, there exists a stable alternating spanning tree generating offer profile $\bold{O^*}$ with $O^*_i=o_i$.

\end{Lemma}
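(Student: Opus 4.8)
The plan is to fix the node $i\in A$ and treat the offer $x$ at $i$ as a one-dimensional parameter, showing that the set of parameters admitting the required structure is the whole ray $(-\infty,O_i]$. Let $L$ be the set of offers $x\le O_i$ for which there exists a stable alternating spanning tree generating offer profile with offer $x$ at $i$. By hypothesis $O_i\in L$, so $L\neq\emptyset$. For each $x\in L$ the witnessing profile is unique by Lemma~\ref{lem:Uniqueness-Offer}, call it $\Phi(x)$, and the family $\{\Phi(x)\}_{x\in L}$ is strictly monotone by the pairwise comparison of Lemma~\ref{lem:offer-monotonicity}: lowering $x$ strictly lowers every offer on $A$ and strictly raises every offer on $B$. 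I would then prove two properties, \textbf{(closedness)} $L$ is closed, and \textbf{(downward openness)} for every $x\in L$ there is $\varepsilon>0$ with $(x-\varepsilon,x]\subseteq L$. These combine by an infimum argument: if $c=\inf\{y:[y,O_i]\subseteq L\}$ were finite then closedness gives $c\in L$ and downward openness then produces points of $L$ below $c$, contradicting the definition of $c$; hence $c=-\infty$ and $(-\infty,O_i]\subseteq L$, which is the claim.

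For \textbf{closedness} I would take $x_n\to x^{*}$ with $x_n\in L$ and consider the witnesses $\Phi(x_n)$, each carrying a near-perfect matching $M_n$ and an alternating spanning tree $T_n$ inside its equality subgraph. Since the graph is finite there are only finitely many possible pairs $(M_n,T_n)$, so along a subsequence they are constant, equal to some $M$ and $T$. On this subsequence every offer is obtained by propagating $x_n$ through the fixed tree $T$, i.e.\ by a fixed finite composition of the continuous pareto payoff functions, so the offers converge to the profile $\mathbf{O}^{*}$ obtained by propagating $x^{*}$ through $T$. Continuity keeps every edge of $T$, hence of $M$, tight for $\mathbf{O}^{*}$, and stability is preserved in the limit because it is a closed system of inequalities; thus $\mathbf{O}^{*}$ is a stable alternating spanning tree generating offer profile with offer $x^{*}$ at $i$, giving $x^{*}\in L$.

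For \textbf{downward openness} I would start from the witness $\mathbf{O}=\Phi(x)$ with its spanning tree $T$ and near-perfect matching $M^{\circ}$, with $i$ the unmatched root, and lower the offer at $i$ to $x-\varepsilon$, redefining all offers by propagation along $T$ so that every tree edge stays tight. The matching $M^{\circ}$ and the tree $T$ remain a near-perfect matching and an alternating spanning tree of the new equality subgraph, so the only thing that can fail is stability at an edge outside $T$. Edges outside $T$ that are strictly slack at $x$ remain slack for small $\varepsilon$; the only obstruction is a non-tree edge $(i',j)$ that is tight at $x$ and whose constraint $O_j\ge v_j(i',O_{i'})$ would be violated after the perturbation. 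Such an event is a Hungarian-type pivot: the tight edge of $j$ must switch, and I would re-establish the structure by verifying the neighbor condition of Lemma~\ref{lem:neighbor-size}, namely that every $B'\subseteq B$ still has at least $|B'|+1$ neighbors in $A$ inside the new equality subgraph. Feeding this into the Proposition following Lemma~\ref{lem:neighbor-size} produces a near-perfect matching, and the connectedness of the equality subgraph guaranteed by Lemma~\ref{lem:conn-stbl} lets me grow it into an alternating spanning tree, so $x-\varepsilon\in L$.

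The main obstacle is exactly this pivot in the openness step: showing that \emph{lowering} the root offer can never destroy matchability of the equality subgraph on the $B$-side, so that a near-perfect matching always re-forms. This is where the direction $o_i\le O_i$ is essential, and where the monotonicity of Lemma~\ref{lem:offer-monotonicity}, the stability comparisons of Lemma~\ref{lem:Stbl-Ineq}, and the path and neighborhood structure of the equality subgraph in Lemma~\ref{lem:conn-stbl} (how maximum offer inducing paths must traverse the sets $Nbr^{EQ(\mathbf{O})}(j)$) must be combined to certify that the neighbor inequalities of Lemma~\ref{lem:neighbor-size} survive each event. Once that is in hand, closedness and downward openness close the argument through the infimum reduction above.
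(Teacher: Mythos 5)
Your overall skeleton --- define $L=\{x \le O_i : \text{a witness } \Phi(x) \text{ exists}\}$, show it is nonempty, closed, and downward open, and conclude by the infimum argument --- is sound as real analysis, and your closedness step is essentially correct: there are only finitely many pairs of near-perfect matchings and spanning trees, offers propagate through a fixed spanning tree by a fixed composition of continuous pareto payoff functions, and stability is a system of weak inequalities, hence preserved under limits. The genuine gap is the downward openness step, and it is not a detail: it contains the entire mathematical content of the lemma, as you concede by calling it ``the main obstacle.'' Concretely, after lowering the root offer to $x-\varepsilon$ and propagating through $T$, every offer on $A$ falls and every offer on $B$ rises, so at a tight non-tree edge $\left(i',j'\right)$ both $O_{j'}$ and $v_{j'}\left(i',O_{i'}\right)$ increase; since the pareto payoff functions are arbitrary continuous strictly monotone functions, their relative rates can make stability fail for \emph{every} $\varepsilon>0$, so the obstruction cannot be avoided by shrinking $\varepsilon$. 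Your proposed repair, the ``Hungarian-type pivot,'' is exactly where the proof has to do its work: re-routing $j'$ through $\left(i',j'\right)$ changes the offers of the entire subtree hanging below $j'$, which can break further tight edges in a cascade; if $i'$ itself lies in that subtree, the swap creates a cycle rather than re-attaching the detached component; and any swap must also preserve the alternating structure (each $B$-node keeping a matching edge to its unique child). None of the lemmas you cite delivers this. Lemma \ref{lem:neighbor-size} presupposes the near-perfect-matching-plus-alternating-spanning-tree structure you are trying to re-establish, and Lemma \ref{lem:offer-monotonicity} takes as hypothesis the existence of stable alternating spanning tree generating offer profiles for all smaller offers at a node --- which is precisely the statement under proof --- so invoking it inside the openness argument (or to assert monotonicity of the family $\Phi$ on $L$) is circular as stated.

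For comparison, the paper closes this gap with an induction on $|B|$ for which your one-dimensional continuity argument has no substitute. In the inductive step, the candidate profile at $o_i$ is defined globally by maximum offer inducing paths (Lemma \ref{lem:conn-stbl}), a maximum matching and its Hungarian forest are taken in the resulting equality subgraph, and the forest is repaired iteratively: at each step the induction hypothesis is applied to the subgraph spanned by the current alternating tree (which has strictly fewer $B$-nodes), with the binding expanding offer placed at an expanding node, and a chain of structural propositions shows that the matching size increases in finite time until a single alternating spanning tree containing $i$ remains, the offer profile staying stable throughout. That induction on graph size is what tames the cascade your pivot would unleash; without it, or an equivalent combinatorial termination argument, your openness step is an assertion rather than a proof.
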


\begin{proof}

First we note that since there exists a near-perfect matching with an alternating spanning tree in $S$, therefore by lemma \ref{lem:neighbor-size}, every $A' \subset A$ has at least $|A'|$ neighbors in $B$ and every $B' \subseteq B$ has at least $|B'|+1$ neighbors in $A$.

We will prove the lemma by induction. First for $|B|=1$, lets call $A=\{a_1,a_2\}$ and $B=\{b\}$. Assume there exists a stable alternating spannign tree generating offer profile $\bold{O}$ with the equality subgraph $EQ\left(\bold{O}\right)$, a near perfect matching $M^{\circ}\left(\bold{O}\right)$ and the associated alternating spanning tree $T^{M^{\circ}\left(\bold{O}\right)}$. Then pick $o_{a_1} < O_{a_1}$ and set $\bold{O}'$ as $O'_{a_1}=o_{a_1}$, $O'_{b}=v_b\left(a_1, o_{a_1}\right)$ and  $O'_{a_2}=v_{a_2}\left(b, O'_{b}\right)$. Then clearly, $T^{M^{\circ}\left(\bold{O'}\right)}=T^{M^{\circ}\left(\bold{O}\right)}$. Hence the lemma is true when $|B|=1$.

Now assume that the lemma is true for all $1 \leq |B| < n$. We will show that the lemma is true for $|B|=n$. Pick $i \in A$ and $o_i < O_i$.

\begin{itemize}

\item For all $i' \in A$ set $O'_{i'} = \max_{P \in \bold{P}^S_{i,i'}}{f^P_{i,i'}\left(o_i\right)}$.
\item For all $j \in B$, set $O'_{j} = \max_{i' \in Nbr^S\left(j\right)}{v_j\left(i',O'_{i'}\right)}$.

\end{itemize}

From lemma \ref{lem:conn-stbl} the equality subgraph $EQ\left(\bold{O}'\right)$ is connected and the offer profile $\bold{O}'$ is stable. Also all nodes $j \in B$ have at least one edge in the equality subgraph. Pick a maximum matching $M^*$ in $EQ\left(\bold{O}'\right)$ and the alternating forest $F^*=F^{M^*}$ with respect to the matching $M^*$. If $F^*$ has exactly one alternating tree that spans all nodes in $X$, then $\bold{O}^* = \bold{O}'$ is the desired offer profile and the lemma is true for $|B|=n$. Otherwise, we proceed as follows. We will denote:

\begin{itemize}

\item $A^{F^*} = A \cap X^{F^*}$, $B^{F^*} = B \cap X^{F^*}$: the set of nodes in the Hungarian forest that belong to $A$ and $B$ respectively.

\item $A^{F^*}_{out} = A \setminus A^{F^*}$, $B^{F^*}_{out} = B \setminus B^{F^*}$: the set of nodes outside the Hungarian forest that belong to $A$ and $B$ respectively.

\end{itemize}

We will create a sequence of offer profiles $\bold{O}^t$ all with $O^t_i=o_i$ and show that the sequences converges to a stable alternating spanning tree generating offer profile in finite number of steps.
Let $I = \{i_1, i_2, ... , i_m\} \subset A$ be the set of unmatched nodes in $A$. Then the Hungarian forest $F^*$ has $m$ alternating trees each rooted at one of the nodes in $I$. For node $i_k$ we will call the alternating tree rooted at $i_k$ as $T_{i_k}$. Then $T_{i_k}$ has one more node in $A$ than in $B$.

For t=0, set

\begin{itemize}

\item $\bold{O}^t=\bold{O'}$.

\item $EQ^t = EQ\left(\bold{O^t}\right)$.

\item $M^t = M^*$ and $F^t=F^*$.

\item $A^t = A^{F^t}$, $B^t = B^{F^t}$ and $A^t_{out} = A^{F^t}_{out}$, $B^t_{out} = B^{F^t}_{out}$.

\item $m^t = m$, $I^t = \{i^t_1, i^t_2, ... , i^t_{m^t}\} = I$ and $\forall i_k \in I^t, T^t_{i^t_k} = T_{i_k}$, $i^t = i^t_1$, $T^t=T^t_{i^t_1}$.

\end{itemize}


At any time $t$, pick $i^t$ and the alternating tree $T^t$. By lemma \ref{lem:neighbor-size} there exists an expanding node $i' \in C^{T^t}$ with an expanding offer $eo_{i'} = \max_{j' \in D^{T^t}} {v_{i'}\left(j', O^t_{j'}\right)}$. Since $|B^{T^t}| < n$, therefore by the assumption for induction, within the subgraph $S_{|X^{T^t}}$, there exists a stable alternating spanning tree generating offer profile $\bold{O}^{i'}_{|X^{T^t}}$ with $O^{i'}_{i'}=eo_{i'}$. Consider any two expanding nodes $i'$ and $i''$, their respective expanding offers $eo_{i'}$ and $eo_{i''}$ and their respective stable alternating spanning tree generating offer profiles, $\bold{O}^{i'}_{|X^{T^t}}$ and $\bold{O}^{i''}_{|X^{T^t}}$. By lemma \ref{lem:offer-monotonicity}, if $eo_{i'} > O^{i''}_{i'}$, then $eo_{i''}<O^{i'}_{i''}$. By repeated application of lemma \ref{lem:offer-monotonicity}, we find that there exists an expanding node $i^*$ with expanding offers $eo_{i^*}$ and stable alternating spanning tree generating offer profile, $\bold{O}^{i^*}_{|X^{T^t}}$ such that for all expanding nodes $i'$, $eo_{i'} \leq O^{i^*}_{i'}$. For $t+1$, we set:

\begin{itemize}

\item For all $i' \in X^{T^t}$, set $O^{t+1}_{i'}=O^{i^*}_{i'}$ and for $i' \notin X^{T^t}$, set $O^{t+1}_{i'}=O^{t}_{i'}$.

\item $EQ^{t+1} = EQ\left(\bold{O}^{t+1}\right)$.

\item Since, $\bold{O}^{t+1}_{|X^{T^t}}$ is a stable alternating spanning tree generating offer profile within the subgraph $S_{|X^{T^t}}$, therefore there is a unique near-perfect matching $M^*_{|X^{T^t}} \subseteq EQ_{|X^{T^t}}$ that leaves $i^t$ unmatched. First set $M^{t+1}=M^{t}_{|X^{T^t}_{out}} \cup M^*_{|X^{T^t}}$. If there exists an augmenting path from $i^t$ with respect to the matching $M^{t+1}$ within the equality subgraph $EQ^{t+1}$, then switch the edges within the matching and outside the matching along the augmenting path to create a new matching $M^{t+1}$. Clearly this is a maximum matching in $EQ^{t+1}$ because there does not exist any other augmenting paths in $EQ^{t+1}$. $F^{t+1}$ is the Hungarian forest induced by the matching $M^{t+1}$ in the equality subgraph $EQ^{t+1}$.

\item $A^{t+1} = A^{F^{t+1}}$, $B^{t+1} = B^{F^{t+1}}$ and $A^{t+1}_{out} = A^{F^{t+1}}_{out}$, $B^{t+1}_{out} = B^{F^{t+1}}_{out}$.

\item If the matching size changed, then set $m^{t+1}=m^t-1$, for all $k<m^t$,set $i^{t+1}_k=i^{t}_{k+1}$, $I^{t+1} = \{i^{t+1}_1, ... , i^{t+1}_{m^{t+1}}\}$. If the matching did not change set $m^{t+1}=m^t$, for all $k \leq m^t$,set $i^{t+1}_k=i^{t}_{k}$, $I^{t+1} = I^{t}$. $\forall i^{t+1}_k \in I^{t+1}$, $T^{t+1}_{i^{t+1}_k}$ is the new alternating tree rooted at $i^{t+1}_k$. Set $T^{t+1} = T^{t+1}_{i^{t+1}_1}$.

\end{itemize}

We now show that the Hungarian forest satisfies certain properties at all time $t$.

\begin{proposition} \label{prop:character-forest}

The following hold about the Hungarian forest at any time $t$:

\begin{itemize}

\item $|B^{F^{t}}_{out}| \geq |A^{F^{t}}_{out}|$.

\item There is no edge between a node in  $A^{F^{t}}$ and a node in  $B^{F^{t}}_{out}$ in the equality subgraph $EQ\left(\bold{O}^t\right)$, i.e.- $\left(A^{F^{t}} \times  B^{F^{t}}_{out}\right) \cap EQ\left(\bold{O}^t\right) = \phi$. In other words, all the neighbors of $B^{F^{t}}_{out}$ belong to $A^{F^{t}}_{out}$.

\item All edges in the matching $M^t$ belong to $A^{F^{t}} \times B^{F^{t}} \cup A^{F^{t}}_{out} \times B^{F^{t}}_{out}$.

\item \label{prop:one-more-unmatched} The number of alternating trees in the Hungarian forest is one more than the number of unmatched nodes in $B^{F^{t}}_{out}$,  i.e.- $m^t = |B^{F^{t}}_{out}|+1$.

\end{itemize}

\end{proposition}

\begin{proof}

Since all nodes in $A^{F^{t}}_{out}$ are matched to nodes in $B^{F^{t}}_{out}$, the first claim holds.

Pick a node$i' \in A^{F^{t}}$ and an alternating tree $T$ that contains $i'$. If there exists an edge $\left(i',j'\right) \in EQ\left(\bold{O}^t\right)$ between node $i'$ and another node $j' \in B$, then:

\begin{itemize}

\item if the edge is in matching $M^t$, then the alternating path from the root of the alternating tree $T$ to $i'$ includes $j'$ and hence $j' \notin B^{F^{t}}_{out}$.

\item if the edge is not in the matching $M^*$, then the alternating path from the root of the alternating tree $T$ to $i'$ can be extended to include $j'$. Hence, $j' \notin B^{F^{t}}_{out}$.

\end{itemize}

Hence, the second claim follows and the third claim follows from it.

We now prove the fourth claim. The number of unmatched nodes in $B^{F^{t}}_{out}$ is $|B^{F^{t}}_{out}|-|A^{F^{t}}_{out}|$. Since all nodes in $A^{F^{t}}_{out}$ and $B^{F^{t}}$ are matched and the number of unmatched nodes in $A$ is one more than the number of unmatched nodes in $B$, therefore the number of unmatched nodes in $A^{F^{t}}$ is $|B^{F^{t}}_{out}|-|A^{F^{t}}_{out}|+1$. Since each unmatched node in $A^{F^{t}}$ is the root of a unique alternating tree, and each alternating tree has a unique root that belong to $A^{F^t}$ therefore the number of alternating trees in the Hungarian forest is one more than the number of unmatched nodes in $B^{F^{t}}_{out}$.



\end{proof}

We also observe that the offer profile $\bold{O}^t$ is stable at any time $t$ as shown in the following proposition.

\begin{proposition} \label{prop:stbl-offer-profile}

At any time $t$, the offer profile $\bold{O}^t$ is stable.

\end{proposition}

\begin{proof}

From \ref{lem:conn-stbl}, we know that the offer profile is stable at $t=0$.

Assume that for some $t \geq 0$, the offer profile $\bold{O}^t$ is stable. At iteration $t+1$, if the offers change, then:

From the construction of $\bold{O}^{t+1}$, and by lemma \ref{lem:offer-monotonicity} we know that:

\begin{align*}
&\forall i' \in A \setminus A^{T^t}, O^{t+1}_{i'}=O^{t}_{i'} \mbox{ and } \forall j' \in B \setminus B^{T^t}, O^{t+1}_{j'}=O^{t}_{j'}\\
&\forall i' \in A^{T^t}, O^{t+1}_{i'}<O^{t}_{i'} \mbox{ and } \forall j' \in B^{T^t}, O^{t+1}_{j'}>O^{t}_{j'}
\end{align*}

The edges in $E$ can be divided into four mutually exclusive subsets:

\begin{itemize}

\item $ E1 = E \cap \left(A^{T^t} \times B^{T^t}\right)$
\item $ E2 = E \cap \left(\left(A \setminus A^{T^t}\right) \times B^{T^t}\right)$
\item $ E3 = E \cap \left(A^{T^t} \times \left(B \setminus B^{T^t}\right)\right)$
\item $ E4 = E \cap \left(\left(A \setminus A^{T^t}\right) \times \left(B \setminus B^{T^t}\right)\right)$

\end{itemize}

From the construction, since $\bold{O}^{t+1}_{|X^{T^t}}$ is a stable alternating spanning tree generating offer profile within the subgraph $S_{|X^{T^t}}$, therefore,
\begin{align*}
&\forall \left(i',j'\right) \in E1, O_{i'}^{t+1} \geq v_{i'}\left(j',O_{j'}^{t+1}\right).
\end{align*}

Since $\bold{O}^t$ was stable therefore
\begin{align*}
&\forall \left(i',j'\right) \in E4, O_{i'}^{t+1} = O_{i'}^{t} \geq v_{i'}\left(j',O_{j'}^{t}\right) = v_{i'}\left(j',O_{j'}^{t+1}\right).
\end{align*}

From the construction  of $\bold{O}^{t+1}$ and the stability of $\bold{O}^t$ and since the pareto payoff functions are strictly decreasing therefore
\begin{align*}
&\forall \left(i',j'\right) \in E2, O_{i'}^{t+1} = O_{i'}^{t} \geq v_{i'}\left(j',O_{j'}^{t}\right) > v_{i'}\left(j',O_{j'}^{t+1}\right)
\end{align*}

Since for all $i' \in A_{t}$, $O^{t+1}_{i'} \geq eo_{i'}$, therefore
\begin{align*}
&\forall \left(i,j\right) \in E3, O_{i'}^{t+1} \geq eo_{i'} \geq v_{i'}\left(j',O_{j'}^{t}\right) = v_{i'}\left(j',O_{j'}^{t+1}\right)\\
\end{align*}

Therefore we see that
\begin{align*}
&\forall \left(i',j'\right) \in E, O_{i'}^{t+1} \geq v_{i'}\left(j',O_{j'}^{t+1}\right)
\end{align*}
and hence the offer profile $\bold{O}^{t+1}$ is stable.

By induction at any iteration $t \geq 0$, the offer profile $\bold{O}^t$ is stable.

\end{proof}

We also notice the following about the structural properties of the Hungarian forest and the offer profile at any time $t$ in the following proposition.

\begin{proposition} \label{prop:char-forest-offer}

At any time $t \geq 0$:

\begin{enumerate}

\item For all $t'>t$ and all $i' \in A$, $O^{t'}_{i'} \leq O^{t}_{i'}$ and for all $j' \in B$, $O^{t'}_{j'} \geq O^{t}_{j'}$.

\item For all unmatched nodes $j' \in B$, $O^t_{j'}=O^0_{j'}$.

\item If at any time $t$ a node $j \in B \setminus B^{T^t}$ has $O^t_{j} > O^t_{j}$, then $j$ has an alternating path for the matching $M^t$ in $EQ^t$ from some $j' \in B$ such that $O^t_{j'}=O^0_{j'}$.

\item If at any time $t$ a node $j \in B^{t}$ has $O^t_{j} > O^0_{j}$, then $j$ has a path in $EQ^t$ to some $j' \in B$ such that $O^t_{j'}=O^0_{j'}$.

\end{enumerate}

\end{proposition}

\begin{proof}

Since at any time $t$, for any expanding node $i' \in C^{T^t}$, the expanding offer $eo_{i'} < O^t_{i'}$, therefore by lemma \ref{lem:offer-monotonicity} for all $i' \in A^{T^t}$, $O^t_{i'}<O^{t-1}_{i'}$ and for all $i' \in A \setminus A^{T^t}$, $O^t_{i'}=O^{t-1}_{i'}$. Also, for all $j' \in B^{T^t}$, $O^t_{j'}>O^{t-1}_{j'}$ and for all $j' \in B \setminus B^{T^t}$, $O^t_{j'}=O^{t-1}_{j'}$. Hence the first claim holds.

Clearly, if the node $j' \in B$ is unmatched, it must not have been any alternating tree until time $t$. Hence $O^t_{j'}=O^0_{j'}$.

We now prove the third claim. first we note that $B \setminus B^{T^t} = \left(B^t \setminus B^{T^t}\right) \cup \left(B \setminus B^{t}\right)$.\\
If $j \in B^t \setminus B^{T^t}$, i.e.- $j$ is in the Hungarian forest but not in the alternating tree $T^t$ at time $t$, then $O^t_j=O^0_j$. To see this, assume that $j \in T^t_{i^t_k}$ for some $1<k<m^t$. Then all nodes $i' \in A^{T^t_{i^t_k}}$ have $O^t_{i'}=O^0_{i'}$ and therefore by stability of $\bold{O}^0$ and claim 1, $O^t_j=O^0_j$.\\
If a node $j \in B \setminus B^{t}$, i.e.- $j$ is not in the hungarian forest, then either $O^t_j=O^0_j$ or at some time $t' < t$ it was in the alternating tree $T^{t'}$. Pick $t^\circ$ to the be maximum of such times $t'$. Since, $j'$ was never in the hungarian forest after $t^\circ$, therefore at $t^\circ+1$, the alternating tree $T^{t^\circ}$ connected to a joining node $j'$ with an alternating path to an unmatched node $j^\circ$. From claim 2, $O^{t^\circ}_{j^\circ}=O^0_{j^\circ}$. At that time $t^\circ+1$, an alternating path was created from $j^\circ$ to $j$.  Clearly, $j^\circ$ was never in the hungarian forest until $t$, otherwise $j$ would be in the hungarian forest after time $t^\circ$ too. Therefore, $O^t_{j^\circ}=O^{t^\circ}_{j^\circ}=O^0_{j^\circ}$ and $j$ has an alternating path from $j^\circ$ for which $O^t_{j^\circ}=O^0_{j^\circ}$. Hence claim 3 holds.

We will prove the fourth claim by induction. Clearly the claim holds for $t=0$. Assume the claim holds for some $t \geq 0$. At time $t+1$, the alternating tree connects to one of the joining nodes $j \in B \setminus B^{T^t}$. By claim 3 $j$ has a path to some $j'$ with $O^{t+1}_{j'} = O^t_{j'} = O^0_{j'}$ and hence all nodes in the alternating tree have a path to a nodes $j'$ for which $O^{t+1}_{j'}=O^0_{j'}$. Hence by induction the claim holds.

\end{proof}

We now observe cetain properties of the equality subgraph outside the Hungarian forest at all times in the following proposition.

\begin{proposition} \label{prop:strct-outside-forest}

At any time $t \geq 0$, the following hold:

\begin{enumerate}

\item If there is more than one alternating tree in the Hungarian forest, then for all $j \in B$, such that there is an alternating path $P_{j,i} \subseteq EQ^t$ from $j$ to $i$, for all nodes $k' \in X^{P_{j,i}}$, $O^t_{k'}=O^0_{k'}$.

\item Assume that at time $t+1$ the alternating tree $T^{t}$ connects to a joining node $j \in D^{T^t}=Nbr^E\left(A^{T^t}\right) \setminus B^{T^t}$ such that there is an alternating path $P_{j,i} \subseteq EQ^t$ from $j$ to $i$,. Let $J \subset D^{T^t}$ be the set of joining nodes that the tree $T^t$ connects to at time $t+1$. Then each node in $D^{T^t}$ can be reached on alternating paths from nodes in $J$ in $EQ^{t}$.

\item If $i$ belongs to the alternating tree $T^t$ then $D^{T^t} = \phi$.

\item If there is more than one alternating tree in the Hungarian forest, the the node $i$ does not belong to the Hungarian forest.

\end{enumerate}

\end{proposition}

\begin{proof}

Clearly, the first claim hold at $t=0$.

For the fourth claim at $t=0$, assume that there is more than one alternating tree in the Hungarian forest and $i$ belongs to the Hungarian forest. Assume without loss of generality that $i$ belongs to an alternating tree $T$. Then $A^T \subset A$ has at least $|A^T|$ neighbors in $S$. Since $T$ has $|A^T|-1$ nodes in $B^T$, therefore at least one of these neighbors is outside $T$. Pick one such neighbor $j' \in B \setminus B^T$. By lemma \ref{lem:conn-stbl}, since the tree $T$ contains $i$, therefore, $T$ contains at least one neighbor $i'$ of $j'$ in the equality subgraph $EQ\left(\bold{O}^0\right)$. Then since $\left(i',j'\right) \in EQ\left(\bold{O}^0\right)$ and $i' \in A^T$, therefore by proposition \ref{prop:character-forest}, $j' \in B^T$. This contradicts our assumption that $j'$ is outside the alternating tree. Hence by contradiction, the fourth claim holds at $t=0$.

Assume the first and the fourth claims hold for some $t \geq 0$. Then at $t+1$ if for any node $k' \in X^P$, $O^{t+1}_{k'} \neq O^t_{k'} = O^0_{k'}$, then $k'$ must be in the alternating tree $B^{T^t}$ at $t$. This means that $i$ was in the alternating tree at $t$ which is a contradiction. Hence, the first claim is satisfied at $t+1$. Therefore by induction claim 1 holds true.

At $t+1$, assume that the alternating tree $T^t$ connects to a joining node $j$ that has an alternating path to $i$. Let $J \subset D^{T^t}$ be the set of joining nodes that the tree $T^t$ connects to at time $t+1$. Pick any $j' \in Nbr^E\left(A^{T^t}\right) \setminus B^{T^t}$ and assume that $j'$ is not reachable from an alternating path from any node in $J$ in $EQ^t$. Clearly, then $j'$ is not reachable from an alternating path from any node in $J$ in $EQ^{t+1}$. Pick $i' \in A^{T^t}$ such that $\left(i',j'\right) \in E$. By proposition \ref{prop:char-forest-offer}, $j'$ is reachable by an alternating path $P_{j^\circ, j'}$ from some $j^\circ$ with $O^t_{j^\circ}=O^0_{j^\circ}$ in both $EQ^{t}$ $EQ^{t+1}$. By stability of $\bold{O}^t$ and $\bold{O}^{t+1}$, and claim 1 of proposition \ref{prop:char-forest-offer}, all neighbors of $j^\circ$ in $EQ^0$ are also neighbors of $j$ in $EQ^{t}$ and $EQ^{t+1}$. Pick a neighbor $i^\circ$ of $j^\circ$ in $EQ^0$. From the assumption, no node in $X^{P_{j^\circ, j'}} \cup \{i^\circ\}$ belongs to the alternating tree $T^t$ and no node in $X^{P_{j^\circ, j'}} \cup \{i^\circ\}$ is reachable by an alternating path from any node in $J$. Consider the paths $P_{i,i'} \subset EQ^{t+1}$,$P_{j',j^\circ} \subset EQ^{t+1}$ and the path $P_{i,i^\circ} = P_{i,i'} \cup \{i', j'\} \cup P_{j',j^\circ} \cup \{j^\circ, i^\circ\}$ from $i$ to $i^\circ$. The offer induced on $i^\circ$ by the path $P_{i,i^\circ}$ for the offer $o_i$ on $i$ is

\begin{align*}
& f^{P_{i,i^\circ}}_{i,i^\circ}\left(o_i\right)\\
& = v_{i^\circ}\left(j^\circ,f^{P_{j',j^\circ}}_{j',j^\circ}\left(v_{j'}\left(i',f^{P_{i,i'}}_{i,i'}\left(o_i\right)\right)\right)\right)\\
& > v_{i^\circ}\left(j^\circ,f^{P_{j',j^\circ}}_{j',j^\circ}\left(O^{t+1}_{j'}\right)\right)  \mbox { since by stability of } \bold{O}^{t+1}, O^{t+1}_{j'} > v_{j'}\left(i', O^{t+1}_{i'}\right) = v_{j'}\left(f^{P_{i,i'}}_{i,i'}\left(o_i\right)\right)\\ 
& = v_{i^\circ}\left(j^\circ,O^{t+1}_{j^\circ}\right)\\
& = O^{t+1}_{i^\circ}\\
& = O^0_{i^\circ}
\end{align*}

This is a contradiction because by the definition of $\bold{O}^0$, $O^0_{i^\circ}$ is the maximum path induced offer on $i^\circ$ for the offer $o_i$ on $i$. Hence by contradiction, the second claim holds.

We now prove the third claim. Assume that at time $t$ $i$ belongs to the alternating tree $T^{t}$ and $D^{T^t} \neq \phi$. Pick $j' \in D^{T^t}$.
By proposition \ref{prop:char-forest-offer}, $j'$ is reachable by an alternating path $P_{j^\circ, j'}$ from some $j^\circ$ with $O^t_{j^\circ}=O^0_{j^\circ}$ in $EQ^{t}$. By stability of $\bold{O}^t$, and claim 1 of proposition \ref{prop:char-forest-offer}, all neighbors of $j^\circ$ in $EQ^0$ are also neighbors of $j$ in $EQ^{t}$. Pick a neighbor $i^\circ$ of $j^\circ$ in $EQ^0$. From the assumption, no node in $X^{P_{j^\circ, j'}} \cup \{i^\circ\}$ belongs to the alternating tree $T^t$. Consider the paths $P_{i,i'} \subset EQ^{t}$,$P_{j',j^\circ} \subset EQ^{t}$ and the path $P_{i,i^\circ} = P_{i,i'} \cup \{i', j'\} \cup P_{j',j^\circ} \cup \{j^\circ, i^\circ\}$ from $i$ to $i^\circ$. The offer induced on $i^\circ$ by the path $P_{i,i^\circ}$ for the offer $o_i$ on $i$ is

\begin{align*}
& f^{P_{i,i^\circ}}_{i,i^\circ}\left(o_i\right)\\
& = v_{i^\circ}\left(j^\circ,f^{P_{j',j^\circ}}_{j',j^\circ}\left(v_{j'}\left(f^{P_{i,i'}}_{i,i'}\left(o_i\right)\right)\right)\right)\\
& > v_{i^\circ}\left(j^\circ,f^{P_{j',j^\circ}}_{j',j^\circ}\left(O^{t}_{j'}\right)\right)  \mbox { since by stability of } \bold{O}^{t}, O^{t}_{j'} > v_{j'}\left(i', O^{t}_{i'}\right) = v_{j'}\left(f^{P_{i,i'}}_{i,i'}\left(o_i\right)\right)\\ 
& = v_{i^\circ}\left(j^\circ,O^{t}_{j^\circ}\right)\\
& = O^{t}_{i^\circ}\\
& = O^0_{i^\circ}
\end{align*}

This is a contradiction because by the definition of $\bold{O}^0$, $O^0_{i^\circ}$ is the maximum path induced offer on $i^\circ$ for the offer $o_i$ on $i$. Hence by contradiction, either $i$ does not belong to the alternating tree $T^t$ or $D^{T^t} = \phi$. This proves claim 3.

The fourth claim follows from the first claim as follows. Assume $i$ does not belong to the Hungarian forest until time $t-1$. Assume $i$ belongs to the Hungarian forest at time $t$ which implies that at time $t$, $T^{t-1}$ connected to a node $j$ with an alternating path to $i$ and hence $i$ belongs to the alternating tree $T^t$. Since there are more than one alternating trees in the Hungarian forest, then $|A^{T^t}|<|A|$, and therefore there is at least one joining node $j' \in D^{T^t}$ that $T^t$ can connect to by some expanding node $i' \in A^{T^{t}}$. This contradicts claim 3 that $D^{T^t} = \phi$. Hence $i$ does not belong to the Hungarian forest at time $t$. By induction, claim 4 holds.

\end{proof}


\begin{proposition} \label{prop:spanning-tree}

At any time $t$, if the Hungarian forest has only one alternating tree $T^t$, and this alternating tree contains $i$, then this alternating tree spans all nodes in $X$.

\end{proposition}

\begin{proof}

Since the Hungarian forest has only one alternating tree $T^t$ and it contains $i$, then by proposition \ref{prop:strct-outside-forest}, $D^{T^t} = \phi$. If $A^{T^t} \neq A$ then $|Nbr^E\left(A^{T^t}\right)| = |B^{T^t}|+|D^{T^t}| = |A^{T^t}|-1+|D^{T^t}|=|A^{T^t}|-1<|A^{T^t}|$ which contradicts lemma \ref{lem:neighbor-size}. Therefore $A^{T^t}=A$ and $|B^{T^t}|=|A^{T^t}|-1=|A|-1=|B|$. Therefore $B^{T^t}=B$. Therefore $T^t$ spans all nodes in $X$.


\end{proof}


\begin{proposition} \label{prop:matching-increases}

If at some time $t$, $m^t > 1$, then at some finite time $t^\circ$, the matching increases by $1$.

\end{proposition}

\begin{proof}

The root of the alternating tree $T^t$ is $i^t_1$. At each time $t'>t$, until $i^t_1$ is matched, the alternating tree $T^{t'-1}$ has the root $i^{t'-1}_1=i^{t}_1$ and one of the following happens:

\begin{enumerate}

\item $T^{t'-1}$ connects to a joining node $j \in D^{T^{t'-1}}$ such that $j$ has an alternating path to an unmatched node $j^\circ$.

\item $T^{t'-1}$ connects to a joining node $j \in D^{T^{t'-1}}$ such that $j$ has an alternating path to $i$.

\item $T^{t'-1}$ connects to joining nodes $J \subseteq D^{T^{t'-1}}$ such that no nodes in $J$ have an alternating path to either an unmatched node or to $i$.

\end{enumerate}

In the third case, $B^{T^{t'-1}} \subset B^{T^{t'}}$. Thus the alternating tree rooted at $i^t_1$ increases. Since, $B$ is finite, therefore the third case happens only finitely many times. Therefore, at some finite time $t^\circ$, either of the first two cases happen.  If the first case happens at $t^\circ$, then there is an augmenting path from $i^t_1$ to some unmatched node $j^\circ$. Thus, by construction of matching $M^{t^\circ}$ at time $t^\circ$, $i^t_1$ is matched in $M^{t^\circ}$ and the matching increases by $1$. If the second case happens at time $t^\circ$, then it will contradict proposition \ref{prop:strct-outside-forest} unless the first case happens along with the second case at time $t^\circ$. Hence at time $t^\circ$, $i^t_1$ is matched in $M^{t^\circ}$ and the matching increases by $1$.






\end{proof}

\begin{proposition} \label{prop:convergence-spanning-tree-profile}

At some finite time $t^*$, the offer profile $\bold{O}^{t^*}$ is a stable alternating spanning tree generating offer profile.

\end{proposition}

\begin{proof}

By repeated application of proposition \ref{prop:matching-increases}, we see that at some finite $t^\circ$, there is only one alternating tree in the Hungarian forest. At any time $t' >t^\circ$, unless $T^{t'-1}$ spans all nodes in $X$, the alternating tree $T^{t'-1}$ has the root $i^{t^\circ}_1=i^{t'-1}_1$ and one of the following happens:

\begin{enumerate}

\item $T^{t'-1}$ connects to a joining node $j \in D^{T^{t'-1}}$ such that $j$ has an alternating path to $i$.

\item $T^{t'-1}$ connects to joining nodes $J \subseteq D^{T^{t'-1}}$ such that no nodes in $J$ have an alternating path to $i$.

\end{enumerate}

In the second case, $B^{T^{t'-1}} \subset B^{T^{t'}}$. Thus the alternating tree rooted at $i^{t^\circ}_1$ increases. Since, $B$ is finite, therefore the second case happens only finitely many times. Therefore, at some finite time $t^*$, the first case happens. By proposition \ref{prop:spanning-tree} at time $t^*$, the alternating tree $T^{t^*}$ spans all nodes in $X$. Also by proposition \ref{prop:stbl-offer-profile}, the offer profie $\bold{O}^{t^*}$ is stable. Therefore, at some finite time $t^*$, the offer profile $\bold{O}^{t^*}$ is a stable alternating spanning tree generating offer profile.

\end{proof}

By proposition \ref{prop:convergence-spanning-tree-profile}, in finite time the sequence $\bold{O}^t$ converges to a stable alternating spanning tree generating offer profile $\bold{O}^*$ for $|B| = n$. By induction, the lemma holds. This completes the proof of the lemma.

\end{proof}

We now present the main result of this section that uses the results we developed.

\begin{Theorem} \label{thm:generating-functions}

Assume $|A|-|B|=1$. Assume there exists a stable alternating spanning tree generating offer profile  $\bold{O}$ with $O_j \geq 0$, $\forall j \in B$. Pick $i,i' \in A$ and $j \in B$ and define the functions:

\begin{itemize}

\item $f^S_{i,i'} ~:~ \left(-\inf,O_i\right] \rightarrow \left(-\inf,O_{i'}\right]$ with $f^S_{i,i'}\left(x\right) = O'_{i'}$, such that $\bold{O'}$ is the stable alternating spanning tree generating offer profile with $O'_i=x$ and $O'_j \geq 0$, $\forall j \in B$.
\item $f^S_{i',i} ~:~ \left(-\inf,O_{i'}\right] \rightarrow \left(-\inf,O_i\right]$ with $f^S_{i',i}\left(x\right) = O'_{i}$, such that $\bold{O'}$ is the stable alternating spanning tree generating offer profile with $O'_i=x$ and $O'_j \geq 0$, $\forall j \in B$.
\item $f^S_{i,j} ~:~ \left(-\inf,O_{i}\right] \rightarrow \left[O_{j}, \inf\right)$ with $f^S_{i,j}\left(x\right) = O'_{j}$, such that $\bold{O'}$ is the stable alternating spanning tree generating offer profile with $O'_i=x$ and $O'_j \geq 0$, $\forall j \in B$.

\end{itemize}

The functions  $f^S_{i,i'}$ and $f^S_{i',i}$ are inverse of each other and are continuous and strictly increasing for each pair $i,i' \in A$. The function $f^S_{i,j}$ is continuous and strictly decreasing for each pair $i \in A ,j \in B$.

\end{Theorem}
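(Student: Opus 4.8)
The plan is to view all three functions as built from a single parametrized family of offer profiles and to establish, in order, well-definedness, strict monotonicity, the inverse relation, and continuity. First I would verify the functions are well defined on the stated domains. By Lemma~\ref{lem:existence-stable-spanning-offer-profile} applied at $i$ and at $i'$, for every $x\le O_i$ (respectively every $x\le O_{i'}$) there is a stable alternating spanning tree generating offer profile taking the value $x$ at that node, and by Lemma~\ref{lem:Uniqueness-Offer} it is unique; hence each function returns a single value. Moreover, for $x\le O_i$ Lemma~\ref{lem:offer-monotonicity} (applied with $c_i=O_i$, whose hypothesis holds by the existence lemma) gives $O'_j\ge O_j\ge 0$ for all $j\in B$, so the side condition $O'_j\ge 0$ is automatic and the definitions are unambiguous.

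Strict monotonicity is then immediate from Lemma~\ref{lem:offer-monotonicity}: if $x_1<x_2\le O_i$ with corresponding profiles $\bold{O}^1,\bold{O}^2$, then $O^1_{i'}<O^2_{i'}$ and $O^1_j>O^2_j$, i.e. $f^S_{i,i'}$ is strictly increasing and $f^S_{i,j}$ strictly decreasing; since $f^S_{i,j}(O_i)=O_j$, the latter maps into $[O_j,\infty)$. For the inverse relation, let $\mathcal{O}$ be the family of these profiles with value $\le O_i$ at $i$ (equivalently, by monotonicity, value $\le O_{i'}$ at $i'$). Uniqueness makes the evaluations $\bold{O}'\mapsto O'_i$ and $\bold{O}'\mapsto O'_{i'}$ injective on $\mathcal{O}$, and the existence lemma makes them surjective onto $(-\infty,O_i]$ and $(-\infty,O_{i'}]$. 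As $f^S_{i,i'}$ and $f^S_{i',i}$ are exactly one evaluation composed with the inverse of the other, they are mutually inverse; in particular $f^S_{i,i'}$ is a strictly increasing bijection of $(-\infty,O_i]$ onto $(-\infty,O_{i'}]$.

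For continuity I would use that a strictly monotone function on an interval whose image is an interval is continuous. Since $f^S_{i,i'}$ is strictly increasing and surjective onto the interval $(-\infty,O_{i'}]$, it is continuous, and so is its inverse $f^S_{i',i}$. For $f^S_{i,j}$ I would not argue surjectivity but instead exploit stability: for any profile in $\mathcal{O}$, the node $j$ lies on the spanning tree and therefore has an equality edge, so stability forces $O'_j=\max_{k\in Nbr(j)}v_j\left(k,O'_k\right)$. Rewriting this as $f^S_{i,j}(x)=\max_{k\in Nbr(j)}v_j\left(k,f^S_{i,k}(x)\right)$ exhibits $f^S_{i,j}$ as a maximum over the finitely many neighbors $k\in A$ of compositions of continuous pareto payoff functions with the already continuous functions $f^S_{i,k}$, hence continuous.

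The step I expect to be the main obstacle is the continuity of $f^S_{i,j}$: the ``monotone with interval image'' argument is circular here, and the equality subgraph determining $O'_j$ may change as $x$ varies, so no fixed parent of $j$ can be used. The identity $O'_j=\max_k v_j\left(k,O'_k\right)$, which holds precisely because stability equates $O'_j$ with the best neighbor offer, is what sidesteps this and reduces continuity of $f^S_{i,j}$ to that of the $f^S_{i,k}$.
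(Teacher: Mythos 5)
Your proposal is correct and takes essentially the same route as the paper's proof: well-definedness and the inverse relation from Lemmas \ref{lem:existence-stable-spanning-offer-profile} and \ref{lem:Uniqueness-Offer}, strict monotonicity from Lemma \ref{lem:offer-monotonicity}, continuity of $f^S_{i,i'}$ and $f^S_{i',i}$ via the strictly-monotone-bijection-between-intervals argument, and continuity of $f^S_{i,j}$ via the stability identity $f^S_{i,j}\left(x\right)=\max_{k\in Nbr\left(j\right)}v_j\left(k,f^S_{i,k}\left(x\right)\right)$. If anything, you are slightly more explicit than the paper in justifying that identity (every node of the spanning tree has an equality edge) and in checking that the side condition $O'_j\ge O_j\ge 0$ holds automatically.
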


\begin{proof}

From lemmas \ref{lem:existence-stable-spanning-offer-profile} and \ref{lem:Uniqueness-Offer}, we know that the functions $f^S_{i,i'}$ and $f^S_{i',i}$ are well defined and are inverse of each other. Also from lemma \ref{lem:offer-monotonicity}, we know the functions $f^S_{i,i'}$ and $f^S_{i',i}$ are strictly increasing within the intervals $\left(-\inf,O_i\right)$ and $\left(-\inf,O_{i'}\right)$ respectively.

Now consider an open interval $\left(x,y\right) \subseteq \left(-\inf,O_i\right)$ with $y \leq O_i$ and set $x' = f^S_{i,i'}\left(x\right)$ and $y' = f^S_{i,i'}\left(y\right)$. Then since $f^S_{i,i'}$ and $f^S_{i',i}$ are strictly increasing, $\forall z \in \left(x,y\right)$, $f^S_{i,i'}\left(z\right)=\left(f^{}_{i',i}\right)^{-1}\left(z\right) \in \left(x',y'\right)$ and $\forall z' \in \left(x',y'\right)$, $f^S_{i',i}\left(z'\right)=\left(f^{S}_{i,i'}\right)^{-1}\left(z'\right) \in \left(x,y\right)$. Since, $x,y$ were arbitrarily picked, therefore for all open intervals in $\left(-\inf,O_i\right]$, the inverse images $f^{-1}_{i',i}\left(\left(x,y\right)\right) = f^S_{i,i'}\left(\left(x,y\right)\right)$ are open intervals. Therefore, by definition, $f^S_{i',i}$ is continuous. By similar reasoning, $f^S_{i,i'}$ is continuous.

Since for any $x \in \left(-\inf,O_i\right)$, the offer profile $\bold{O'}$ is stable alternating spanning tree generating offer profile, therefore
$f^S_{i,j}\left(x\right) = \max_{i' \in Nbr\left(j\right)}v_j\left(i',O'_{i'}\right) = \max_{i' \in Nbr\left(\right)}v_j\left(i',f^S_{i,i'}\left(x\right)\right)$.
Since the functions $v$ are continuous and strictly decreasing and $f^S_{i,i'}$ is continuous and strictly increasing and maximum of continous strictly decreasing functions is continuous and strictly decreasing, therefore $f^S_{i,j}\left(x\right)$ is continuous and strictly decreasing in $x$.

\end{proof}


\section{Algorithm}

In this section, we present an algorithm to find a stable and feasible offer profile for a bipartite network $S$. Existence of a stable and feasible offer profile proves the existence of a stable weighted matching in $S$. The algorithm is described as follows.

\subsection{Initialization}

We first define an initial offer profile $\bold{O}^0$ and the initial matching as follows.


For t=0, set

\begin{enumerate}

\item $\forall j \in B$, set $O_j^0 = 0$ and $\forall i \in A$, set $O_i^0 = \max_{j \in Nbr\left(i\right)} v_i\left(j,0\right)$.

\item Set $EQ^0 = EQ\left(\bold O^0\right)$.

\item Set $M^0 = M^*\left(EQ^0\right)$ some maximum matching in $EQ^0$.

\item if there is an alternating path from an unmatched node $i \in A$ with $O_i>0$ to a matched node $i^{\circ} \in A$ with $O_i^{\circ} = 0$, then switch the alternating edges from within the matching $M^0$ to outside the matching $M^0$ and vice-versa along the alternating path from $i$ to $i^{\circ}$. This leaves the matching size unchanged. Repeat this process until no such alternating paths are present in $EQ^0$.



\item Set $I^0 = \{i^0_1, i^0_2, ... , i^0_{m^0}\}$ as the set of unmatched nodes in $A$ with positive offers. Set $m^0 = |I^0|$, and $\forall i^0_k \in I^0, set T^0_{i^0_k} = T_{i^0_k}$ as the alterating tree induced by the matching $M^0$ rooted at $i^0_k$. Set $T^0=T^0_{i^t_1}$ as the alternating tree under consideration.

\end{enumerate}

\subsection{Iteration}

We iteratively change the offer profile to create a sequence of offer profiles. At each time $t \geq 0$, we compute the new offer profile $\bold{O}^{t+1}$ as follows.

While $I^t \neq \phi$, pick $T^t$.

\begin{enumerate}

\item $\forall i \in C^{T^t}$, set the expanding offer
\begin{align*}
&eo_i = \max \{\max_{j \in D^{T^t}}v_i\left(j,O_j^t\right), 0\}\\
&\mbox{and select } i^{c} \in C^{T^t} : f^{S_{|X^{T^t}}}_{i^{c},i}\left(eo_{i^{c}}\right) \geq eo_i, \forall i \in C^{T^t}
\end{align*}
Theorem \ref{thm:generating-functions} implies that such a node exists in $C^{T^t}$.

\item $\forall i \in A_{T^t} \cup B^{T^t}$ set $O_i^{t+1} = f^{S_{|X^{T^t}}}_{i^{c},i}\left(eo_{i^{c}}\right)$ and \\
$\forall i \notin A_{T^t} \cup B_{T^t}$ set $O_i^{t+1} = O_i^t$.

\item Set $EQ^{t+1} = EQ\left(\bold O^{t+1}\right)$.

\item Since, $\bold{O}^{t+1}_{|X^{T^t}}$ is a stable alternating spanning tree generating offer profile within the subgraph $S_{|X^{T^t}}$, therefore there is a unique near-perfect matching $M^*_{|X^{T^t}} \subseteq EQ^{t+1}_{|X^{T^t}}$ that leaves $i^t_1$ unmatched. First set $M^{t+1}=M^{t}_{|X^{T^t}_{out}} \cup M^*_{|X^{T^t}}$. If there exists an augmenting path from $i^t$ with respect to the matching $M^{t+1}$ within the equality subgraph $EQ^{t+1}$, then switch the edges within the matching and outside the matching along the augmenting path to create a new matching $M^{t+1}$. This increases the size of the matchign by $1$. If there is no augmenting path but there is an alternating path from $i^t_1$ to a matched node $i^{\circ} \in A$ with $O^{t+1}_{i^{\circ}} = 0$, then switch the alternating edges from within the matching $M^{t+1}$ and outside the matching $M^{t+1}$ along the alternating path from $i^t_1$ to $i^{\circ}$. This leaves the matching size unchanged but $i^t_1$ is now matched. Clearly this is a maximum matching in $EQ^{t+1}$ because there does not exist any other augmenting paths in $EQ^{t+1}$.

\item If $i^t_1$ is matched in $M^{t+1}$, or $O^{t+1}_{i^t_1} = 0$ then set $m^{t+1}=m^t-1$, for all $k<m^t$,set $i^{t+1}_k=i^{t}_{k+1}$, $I^{t+1} = \{i^{t+1}_1, ... , i^{t+1}_{m^{t+1}}\}$. Otherwise set $m^{t+1}=m^t$, for all $k \leq m^t$,set $i^{t+1}_k=i^{t}_{k}$, $I^{t+1} = I^{t}$. $\forall i^{t+1}_k \in I^{t+1}$, $T^{t+1}_{i^{t+1}_k}$ is the new alternating tree rooted at $i^{t+1}_k$. Set $T^{t+1} = T^{t+1}_{i^{t+1}_1}$.

\end{enumerate}


\subsection{Convergence}

We now show that the algorithm converges in finitely many iterations and the offer profile at the point of convergence is feasible and stable. We need to show that for some finite $t^*$, the maximum matching $M^{t^*}$ has all nodes with positive offers are matched. We also need to show that $\bold{O}^{t^*}$ is stable.

\begin{proposition} \label{prop:stbl-offer}

At any iteration $t \geq 0$, the offer profile $\bold{O}^t$ is stable.

\end{proposition}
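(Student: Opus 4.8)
The plan is to prove the statement by induction on $t$, following the same strategy as the proof of Proposition \ref{prop:stbl-offer-profile} in the previous section. Recall that an offer profile is stable when $O_j \ge v_j\left(i,O_i\right)$ for every $\left(i,j\right) \in E$; since $v_i\left(j,\cdot\right)$ and $v_j\left(i,\cdot\right)$ are mutually inverse strictly decreasing pareto payoff functions (so that $v_j\left(i,v_i\left(j,x\right)\right)=x$), this condition is equivalent to $O_i \ge v_i\left(j,O_j\right)$, and I will verify it in the latter form to match the edge-class bookkeeping of Proposition \ref{prop:stbl-offer-profile}.

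For the base case $t=0$ I would check stability directly from the initialization. Fix $\left(i,j\right) \in E$. By construction $O_i^0 = \max_{j' \in Nbr\left(i\right)} v_i\left(j',0\right) \ge v_i\left(j,0\right) = v_i\left(j,O_j^0\right)$, which is exactly the stability inequality on $\left(i,j\right)$; equivalently, applying the decreasing map $v_j\left(i,\cdot\right)$ gives $v_j\left(i,O_i^0\right) \le 0 = O_j^0$.

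For the inductive step, assume $\bold{O}^t$ is stable. First I would record the monotonicity of the update. Stability of $\bold{O}^t$ gives $v_{i^c}\left(j,O_j^t\right) \le O_{i^c}^t$ for every neighbor $j$ of $i^c$, hence $eo_{i^c} \le O_{i^c}^t$; Lemma \ref{lem:offer-monotonicity} applied inside $S_{|X^{T^t}}$ then yields $O^{t+1}_{i'} \le O^t_{i'}$ for all $i' \in A^{T^t}$ and $O^{t+1}_{j'} \ge O^t_{j'}$ for all $j' \in B^{T^t}$, while offers outside $X^{T^t}$ are unchanged. Partition $E$ into the four classes $E1,\dots,E4$ exactly as in Proposition \ref{prop:stbl-offer-profile}. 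On $E1$ stability holds because $\bold{O}^{t+1}_{|X^{T^t}}$ is itself a stable alternating spanning tree generating offer profile on $S_{|X^{T^t}}$; on $E4$ it is inherited from $\bold{O}^t$ since both endpoints keep their offers; on $E2$ it follows from $O^{t+1}_{i'}=O^t_{i'}$, $O^{t+1}_{j'}\ge O^t_{j'}$, and the monotonicity of $v_{i'}\left(j',\cdot\right)$. For $E3$, any edge $\left(i',j'\right)$ with $i'\in A^{T^t}$ and $j' \in B\setminus B^{T^t}$ forces $i'\in C^{T^t}$ and $j'\in D^{T^t}$, so the choice of $i^c$ and the definition of the expanding offer give $O^{t+1}_{i'} \ge eo_{i'} \ge v_{i'}\left(j',O^t_{j'}\right) = v_{i'}\left(j',O^{t+1}_{j'}\right)$.

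The one ingredient absent from Proposition \ref{prop:stbl-offer-profile} is the floor $eo_i = \max\{\cdot,0\}$, and I expect its handling to be the only delicate point. In the $E3$ bound the floor only enlarges $eo_{i'}$, so it strengthens the inequality and causes no difficulty. The subtle direction is the monotonicity claim $eo_{i^c} \le O^t_{i^c}$, which additionally requires $0 \le O^t_{i^c}$ whenever the floor is active; I would derive this from the invariant that $T^t$ is rooted at a node of positive offer and that the generating functions of Theorem \ref{thm:generating-functions} keep all $B$-offers nonnegative, so the $A$-offers inside the tree remain nonnegative. Once $eo_{i^c}\le O^t_{i^c}$ is secured the monotonicity and the $E1$--$E4$ verification go through verbatim, establishing stability of $\bold{O}^{t+1}$ and completing the induction.
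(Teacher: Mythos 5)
Your proof follows the paper's own proof of this proposition essentially line by line: the same induction on $t$, the same base case from the initialization $O^0_i = \max_{j \in Nbr(i)} v_i(j,0)$, the same monotonicity step (the paper cites Theorem \ref{thm:generating-functions} where you invoke Lemma \ref{lem:offer-monotonicity} inside $S_{|X^{T^t}}$, but these amount to the same thing), and the identical decomposition into $E1,\dots,E4$ with the same per-class inequalities; your observation that $E3$ edges necessarily involve a node of $C^{T^t}$ and a node of $D^{T^t}$ is a point the paper leaves implicit, and your remark that the two forms $O_j \ge v_j(i,O_i)$ and $O_i \ge v_i(j,O_j)$ of stability are equivalent is correct. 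All of that part matches the paper and is sound.

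The gap is in your resolution of the point you yourself flag as delicate. You correctly observe that $eo_{i^c} \le O^t_{i^c}$ is needed -- in fact it is needed not only for monotonicity but even for $f^{S_{|X^{T^t}}}_{i^c,\cdot}(eo_{i^c})$ to be defined, since by Theorem \ref{thm:generating-functions} its domain is $(-\infty, O^t_{i^c}]$ -- and that when the floor in $eo_{i^c}=\max\{\cdot,0\}$ is active this requires $O^t_{i^c} \ge 0$. But your proposed derivation, that nonnegativity of the $B$-offers (plus a positive-offer root) forces the $A$-offers inside the tree to stay nonnegative, does not follow: tree edges lie in the equality subgraph, where $O_{i'} = v_{i'}(j', O_{j'})$ with $v_{i'}(j',\cdot)$ strictly decreasing, so $O_{j'} \ge 0$ yields only the \emph{upper} bound $O_{i'} \le v_{i'}(j',0)$, never a lower bound; equivalently, Theorem \ref{thm:generating-functions} maps $A$-offers into $(-\infty, O_{i'}]$, again an upper bound only. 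A lower bound on $A$-offers would require an upper bound on $B$-offers of the feasibility type $O_{j'} \le v_{j'}(i',0)$, which is a different invariant demanding its own induction. Note also that the update rule floors only the expanding nodes, via $O^{t+1}_{i} = f^{S_{|X^{T^t}}}_{i^c,i}(eo_{i^c}) \ge eo_i \ge 0$ for $i \in C^{T^t}$; the root and non-expanding interior $A$-nodes receive no such protection, so the invariant you sketch is not preserved by the iteration in any evident way. To be fair, the paper's own proof is silent on this entirely -- it asserts the strict monotonicity display without any comment on the $\max\{\cdot,0\}$ truncation -- so you have isolated a genuine subtlety in the algorithm; but the patch you propose does not close it, and as written this step of your argument would fail.
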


\begin{proof}

At $t=0$ we know the offer profile is stable because by construction $\forall i \in A$, set $O_i^0 = \max_{j \in Nbr\left(i\right)} v_i\left(j,O^0_j\right)$. Assume that for some $t \geq 0$, the offer profile $\bold{O}^t$ is stable. If at iteration $t+1$, the offers do not change, then offer profile $\bold{O}^{t+1}$ is stable. If at iteration $t+1$, the offers change, then:

From the construction of $\bold{O}^{t+1}$, and by theorem \ref{thm:generating-functions} we know that:

\begin{align*}
&\forall i' \in A \setminus A^{T^t}, O^{t+1}_{i'}=O^{t}_{i'} \mbox{ and } \forall j' \in B \setminus B^{T^t}, O^{t+1}_{j'}=O^{t}_{j'}\\
&\forall i' \in A^{T^t}, O^{t+1}_{i'}<O^{t}_{i'} \mbox{ and } \forall j' \in B^{T^t}, O^{t+1}_{j'}>O^{t}_{j'}
\end{align*}

The edges in $E$ can be divided into four mutually exclusive subsets:

\begin{itemize}

\item $ E1 = E \cap \left(A^{T^t} \times B^{T^t}\right)$
\item $ E2 = E \cap \left(\left(A \setminus A^{T^t}\right) \times B^{T^t}\right)$
\item $ E3 = E \cap \left(A^{T^t} \times \left(B \setminus B^{T^t}\right)\right)$
\item $ E4 = E \cap \left(\left(A \setminus A^{T^t}\right) \times \left(B \setminus B^{T^t}\right)\right)$

\end{itemize}

From the construction, since $\bold{O}^{t+1}_{|X^{T^t}}$ is a stable alternating spanning tree generating offer profile within the subgraph $S_{|X^{T^t}}$, therefore,
\begin{align*}
&\forall \left(i',j'\right) \in E1, O_{i'}^{t+1} \geq v_{i'}\left(j',O_{j'}^{t+1}\right).
\end{align*}

Since $\bold{O}^t$ was stable therefore
\begin{align*}
&\forall \left(i',j'\right) \in E4, O_{i'}^{t+1} = O_{i'}^{t} \geq v_{i'}\left(j',O_{j'}^{t}\right) = v_{i'}\left(j',O_{j'}^{t+1}\right).
\end{align*}

From the construction  of $\bold{O}^{t+1}$ and the stability of $\bold{O}^t$ and since the pareto payoff functions are strictly decreasing therefore
\begin{align*}
&\forall \left(i',j'\right) \in E2, O_{i'}^{t+1} = O_{i'}^{t} \geq v_{i'}\left(j',O_{j'}^{t}\right) > v_{i'}\left(j',O_{j'}^{t+1}\right)
\end{align*}

Since for all $i' \in A_{t}$, $O^{t+1}_{i'} \geq eo_{i'}$, therefore
\begin{align*}
&\forall \left(i,j\right) \in E3, O_{i'}^{t+1} \geq eo_{i'} \geq v_{i'}\left(j',O_{j'}^{t}\right) = v_{i'}\left(j',O_{j'}^{t+1}\right)\\
\end{align*}

Therefore we see that
\begin{align*}
&\forall \left(i',j'\right) \in E, O_{i'}^{t+1} \geq v_{i'}\left(j',O_{j'}^{t+1}\right)
\end{align*}
and hence the offer profile $\bold{O}^{t+1}$ is stable.

By induction at any iteration $t \geq 0$, the offer profile $\bold{O}^t$ is stable.

\end{proof}

\begin{proposition} \label{prop:positive-offer-matched}

At any iteration $t \geq 0$, if there exists $j \in B$ with $O^t_j > 0$, then $j$ is matched in $M^t$.

\end{proposition}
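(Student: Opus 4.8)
The plan is to prove the statement by induction on $t$. The base case $t=0$ is vacuous: the initialization sets $O^0_j = 0$ for every $j \in B$, and steps (2)--(5) of the initialization only build $EQ^0$ and the matching without changing offers, so no $j \in B$ has $O^0_j > 0$ and there is nothing to check.

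For the inductive step I would assume the claim at time $t$ and split $B$ into the nodes $B^{T^t}$ of the active tree $T^t$ and the remaining nodes $B \setminus B^{T^t}$. For $j \in B^{T^t}$ there is nothing to fear: by construction $M^{t+1}$ contains the near-perfect matching $M^*_{|X^{T^t}}$ of the subgraph $S_{|X^{T^t}}$, which leaves only the root $i^t_1 \in A$ unmatched and therefore matches every node of $B^{T^t}$; the subsequent augmenting-path flip (or the alternating flip to a zero-offer node) only turns unmatched nodes into matched ones and relinks interior nodes, so these matches survive. Hence every $j \in B^{T^t}$ is matched in $M^{t+1}$, irrespective of its offer.

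The real work is for $j \in B \setminus B^{T^t}$. Here step (2) of the iteration gives $O^{t+1}_j = O^t_j$, so if $O^{t+1}_j > 0$ then $O^t_j > 0$ and the induction hypothesis yields a match $(i',j) \in M^t$. The key structural observation is that $i'$ must lie outside the tree: in the alternating tree $T^t$, every matched node of $A^{T^t}$ is matched to its parent in $B^{T^t}$, so a node matched to $j \notin B^{T^t}$ cannot belong to $A^{T^t}$. Thus $(i',j)$ is an edge of $M^t$ with both endpoints outside $T^t$, and it is retained in $M^{t+1} = M^{t}_{|X^{T^t}_{out}} \cup M^*_{|X^{T^t}}$. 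Finally, as in the tree case, the augmenting-path flip from $i^t_1$ (and the optional alternating flip to a matched $A$-node of zero offer) only moves nodes from unmatched to matched and rewires interior nodes, and the single node it can ever drop is an $A$-node of offer $0$; hence $j$ remains matched in $M^{t+1}$.

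Combining the two cases shows every $j \in B$ with $O^{t+1}_j > 0$ is matched in $M^{t+1}$, which completes the induction. The main obstacle is the step for $j \in B \setminus B^{T^t}$: one must verify both that the match of such a $j$ is preserved by the matching update --- which hinges on the alternating-tree structure placing that match outside $T^t$ --- and that none of the path-flips performed during the update can leave a positively-offered $B$-node unmatched, the only node ever unmatched being an $A$-node whose offer is zero.
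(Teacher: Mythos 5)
Your proof is correct and follows essentially the same route as the paper's: induction on $t$, with nodes of $B^{T^t}$ matched by the near-perfect matching $M^*_{|X^{T^t}}$, nodes of $B \setminus B^{T^t}$ with positive offer matched outside the tree via the induction hypothesis (their offers being unchanged), and the path flips unmatching at most a zero-offer node of $A$. The only cosmetic differences are that you justify the partner of $j \notin B^{T^t}$ lying outside $A^{T^t}$ by the tree's matching structure rather than by the absence of equality-subgraph edges between $A^{T^t}$ and $B \setminus B^{T^t}$, and that the paper wraps the same argument in a four-case enumeration.
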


\begin{proof}

At $t=0$, since all nodes in $B$ have offers $0$, this holds true. Assume that at some iteration $t \geq 0$, if there exists $j \in B$ with $O^t_j > 0$, then $j$ is matched in $M^t$. If the offer profile does not change at time $t+1$, then nothing changes and the proposition holds at $t+1$. Otherwise, at time $t+1$, one of the following happens:

\begin{enumerate}

\item $T^{t}$ connects to a joining node $j \in D^{T^{t}}$ such that $j$ has an alternating path to an unmatched node $j^\circ$.

\item  $T^{t}$ connects to a joining node $j \in D^{T^{t}}$ such that $j$ has an alternating path to a matched node $i^\circ \in A \setminus A^{T^{t}}$ with $O^{t+1}_{i^{\circ}} = 0$.

\item $O^{t+1}_i = 0$ for some $i \in A^{T^{t}}$.

\item $T^{t}$ connects to joining nodes $J \subseteq D^{T^{t}}$ but the above cases do not happen.

\end{enumerate}

At $t+1$, all nodes in $B^{T^t}$ are matched to the nodes in $A^{T^t}$ by the selected near-perfect matching $M^*_{|X^{T^t}}$ at time $t+1$. From the assumption at time $t$, we know that if there exists $j \in B$ with $O^t_j > 0$, then $j$ is matched in $M^t$.  Therefore, any node $j \in B \setminus B^{T^t}$ with $O^t_j>0$ is matched to some node in $A \setminus A^{T^t}$ in $M^t$ since $EQ^t \cap \left(A \setminus A^{T^t} \times B \setminus B^{T^t}\right) = \phi$. For all nodes $j \in B \setminus B^{T^t}$, we know that $O^{t+1}_j = O^t_j$.

In the fourth case $M^{t+1} = M^*_{|X^{T^t}} \cup M^t_{|X \setminus X^{T^t}}$ and hence all nodes in $B$ that were matched at time $t$ are matched at time $t+1$. In the first case, before exchanging any edges, we see that as in the third case, all nodes in $B$ that were matched at time $t$ are matched at time $t+1$. By exchanging the edges within the matching $M^{t+1}$ with edges outside the matching in the augmenting path, the nodes in the augmenting path still stay matched and it does not change matching outside the augmenting path, so all nodes in $B$ that were matched at time $t$ are matched at time $t+1$. For the second and third case, by the same argument, all nodes in $B$ that were matched at time $t$ are matched at time $t+1$.
Thus we see that all nodes in $B$ that were matched in $M^t$ are matched in $M^{t+1}$. Therefore any node $j \in B$ with $O^t_j>0$ that was matched in $M^t$ is matched in $M^{t+1}$. Also all nodes $j \in B$ for which $O^{t+1}_j > O^t_j \geq 0$ belong to $B^{T^t}$ and hence are matched in $M^{t+1}$. Therefore any node $j \in B$ with $O^{t+1}_j>0$ is matched in $M^{t+1}$. Thus by induction, the proposition holds.


\end{proof}

\begin{Theorem} \label{thm:final-thm}

There exists a finite time $t^*$ for which $O^{t^*}$ is feasible and stable.

\end{Theorem}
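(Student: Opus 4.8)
The plan is to split the statement into two independent parts: that the algorithm halts in finite time with $I^{t^*}=\phi$, and that at such a halting time the offer profile is both feasible and stable. Stability is already delivered for every iteration by Proposition \ref{prop:stbl-offer}, so the substance lies in the finiteness of the run together with a characterization of the matching at termination. As a preliminary I would record the two monotonicity facts that drive everything: offers of $B$-nodes start at $0$ and only weakly increase, while offers of $A$-nodes stay $\geq 0$ because each expanding offer is clipped below by $0$ in the update rule. Hence all offers remain non-negative throughout the run.

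Next I would pin down feasibility at a halting time $t^*$ where $I^{t^*}=\phi$. Emptiness of $I^{t^*}$ means there is no unmatched node in $A$ with a positive offer, so, combined with non-negativity, every unmatched $A$-node has offer exactly $0$; by Proposition \ref{prop:positive-offer-matched} every $j\in B$ with $O^{t^*}_j>0$ is matched in $M^{t^*}$, so every unmatched $B$-node also has offer $0$. Since $M^{t^*}\subseteq EQ(\bold{O}^{t^*})$, each matched edge $(i,j)$ is tight, i.e.\ $O^{t^*}_j=v_j(i,O^{t^*}_i)$, which is exactly the constraint that the shares $s_i=u_i^{-1}(j,O^{t^*}_i)$ and $s_j=w(i,j)-s_i$ reproduce both offers. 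Because both endpoint offers are non-negative and a zero share yields zero payoff, these shares lie in $[0,w(i,j)]$, so the splits are non-negative and $(M^{t^*},s^{M^{t^*}})$ is a weighted matching whose payoff profile equals $\bold{O}^{t^*}$ (unmatched nodes contributing offer $0$). This establishes feasibility, and with Proposition \ref{prop:stbl-offer} the profile is then feasible and stable.

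The core of the argument is therefore finiteness, which I would prove by the same template as Propositions \ref{prop:matching-increases} and \ref{prop:convergence-spanning-tree-profile}. Fixing attention on the current root $i^t_1$ and its tree $T^t$, at each iteration exactly one of the four possibilities enumerated in the proof of Proposition \ref{prop:positive-offer-matched} occurs: the tree finds an augmenting path to an unmatched node (the matching grows and $i^t_1$ is matched); it reaches a matched $A$-node of offer $0$ (the root becomes matched with the matching size unchanged); the root's own offer is driven to $0$; or the tree merely connects to new joining nodes without any of these. In the last case the structural results force $B^{T^t}\subsetneq B^{T^{t+1}}$, so since $B$ is finite this ``pure growth'' case can recur only finitely often before one of the first three, each of which removes $i^t_1$ from $I^t$ and decreases $m^t$ by one. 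As $m^0$ is finite and $m^t$ never increases, after finitely many root resolutions $I^{t^*}=\phi$.

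I expect the main obstacle to be this last step, namely verifying that a pure-growth iteration strictly enlarges $B^{T^t}$ and that the algorithm cannot revisit an earlier configuration. Here I would rely on the strict monotonicity of the updates inside the processed tree (Theorem \ref{thm:generating-functions}: $A$-offers strictly decrease and $B$-offers strictly increase) together with the invariants of Propositions \ref{prop:char-forest-offer} and \ref{prop:strct-outside-forest}, which control where offer-$0$ and positive-offer nodes may sit relative to the forest. The genuinely delicate point is that the degenerate ``connect to the distinguished root'' case from the existence proof is here replaced by ``connect to an offer-$0$ matched node,'' and I would have to confirm that this replacement is precisely a resolution event and not an obstruction, so that no analogue of the excluded case can stall the process; Lemma \ref{lem:neighbor-size}, via its Hall-type neighbor bound on each alternating tree, then guarantees an expanding node exists until the tree spans its component, forcing continued progress toward a resolution.
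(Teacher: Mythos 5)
Your proposal follows essentially the same route as the paper's own proof: finiteness via the identical case analysis (resolution events that match the root, drive its offer to $0$, or find an offer-$0$ matched node all decrement $m^t$, while pure-growth events strictly enlarge $B^{T^t}$ and hence occur only finitely often), stability via Proposition \ref{prop:stbl-offer}, and feasibility via the same tight-edge construction of splits $s_i = u_i^{-1}\left(j, O^{t^*}_i\right)$ on matched edges with unmatched nodes at offer $0$. If anything, your explicit attention to non-negativity of offers and splits is slightly more careful than the paper's own write-up, which verifies only that the splits sum to $w\left(i,j\right)$.
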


\begin{proof}

We will first show that at some finite time $t^*$, $m^{t^*} = 0$, i.e.- all nodes in $A$ with positive offers are matched. Then we will show that at that time $t^*$, $\bold{O}^{t^*}$ is feasible and stable.

For any $t$, the root of the alternating tree $T^t$ is $i^t_1$. At each time $t'>t$, until $i^t_1$ is matched, the alternating tree $T^{t'}$ has the root $i^{t'}_1=i^{t}_1$ and one of the following happens:

\begin{enumerate}

\item $T^{t'-1}$ connects to a joining node $j \in D^{T^{t'-1}}$ such that $j$ has an alternating path to an unmatched node $j^\circ$ or a matched node $i^\circ \in A \setminus A^{T^{t'-1}}$ with $O^{t'}_{i^{\circ}} = 0$.

\item $O^{t'}_i = 0$ for some $i \in A^{T^{t'-1}}$.

\item $T^{t'-1}$ connects to joining nodes $J \subseteq D^{T^{t'-1}}$ but the above two cases do not happen.

\end{enumerate}

In the third case, $B^{T^{t'-1}} \subset B^{T^{t'}}$. Thus the alternating tree rooted at $i^t_1$ increases. Since, $B$ is finite, therefore the third case happens only finitely many times. Therefore, at some finite time $t^\circ$, either of the first two cases happen. If the first case happes at $t^\circ$, then there is an augmenting path from $i^t_1$ to $j^\circ$ or there is an alternating path from $i^t_1$ to some $i^\circ$ with $O^{t^{\circ}}_{i^{\circ}} = 0$. Thus, by construction of matching $M^{t^\circ}$ at time $t^\circ$, $i^t_1$ is matched in $M^{t^\circ}$ and $m^{t^\circ} = m^{t^\circ - 1}$. If the second case happes at $t^\circ$, then there is an alternating path from $i^t_1$ to some $i$ with $O^{t^{\circ}}_{i} = 0$. Thus, by construction of matching $M^{t^\circ}$ at time $t^\circ$, $i^t_1$ is matched in $M^{t^\circ}$ and $m^{t^\circ} = m^{t^\circ - 1}$. Hence at time $t^\circ$, $i^t_1$ is matched in $M^{t^\circ}$ and $m^{t^\circ} = m^{t^\circ - 1}$.

Since $m^0$ is finite, and $m^t$ decreases by $1$ in finitely many iterations when $m^t > 0$, therefore at some finite time $t^*$, $m^{t^*} = 0$.

We now show that at $t^*$, $O^{t^*}$ is feasible and stable. From proposition \ref{prop:stbl-offer}, it follows that $O^{t^*}$ is stable. Also from proposition \ref{prop:positive-offer-matched} we see that all nodes $j \in B$ with $O^{t^*}_j > 0$ are matched in $M^{t^*}$. Since $m^{t^*} = 0$, therefore all nodes $i \in A$ with $O^{t^*}_i > 0$ are matched in $M^{t^*}$. Consider the matching $M^{t^*}$ and a split $s^{M^{t^*}}$ as follows:

\begin{align*}
& \forall (i,j) \in M^{t^*}, \mbox{ set } s^{M^{t^*}}\left(i\right) = u^{-1}_i\left(j, O^{t^*}_i\right),\\
& s^{M^{t^*}}\left(j\right) = u^{-1}_j\left(i, O^{t^*}_j\right),\\
& \forall \mbox { unmatched } i \in A, s^{M^{t^*}}\left(i\right) = 0.\\
& \forall \mbox { unmatched } i \in A, s^{M^{t^*}}\left(i\right) = 0.
\end{align*}

The above is a well defined split because for all$ (i,j) \in M^{t^*}$,

\begin{align*}
&s^{M^{t^*}}\left(i\right)+s^{M^{t^*}}\left(j\right)\\
&= u^{-1}_i\left(j, O^{t^*}_i\right)+s^{M^{t^*}}\left(j\right)\\
&= u^{-1}_i\left(j, v_i\left(j, O^{t^*}_j\right)\right)+s^{M^{t^*}}\left(j\right)\\
&=  u^{-1}_i\left(j, v_i\left(j, u_j\left(i, s^{M^{t^*}}\left(j\right)\right)\right)\right)+s^{M^{t^*}}\left(j\right)\\
&=  u^{-1}_i\left(j, u_i\left(j, w\left(i,j\right)-s^{M^{t^*}}\left(j\right)\right)\right)+s^{M^{t^*}}\left(j\right)\\
&=  w\left(i,j\right)-s^{M^{t^*}}\left(j\right)+s^{M^{t^*}}\left(j\right)\\
&= w\left(i,j\right).
\end{align*}

Hence, we have a weighted matching $\left(M^{t^*}, s^{M^{t^*}}\right)$ with payoff profile $\bold{U}\left(M^{t^*}, s^{M^{t^*}}\right) = \bold{O}^{t^*}$. Therefore $\bold{O}^{t^*}$ is a feasible offer profile.

\end{proof}

From theorem \ref{thm:final-thm}, we have proved the existence of a feasible and stable offer profile and have also found a stable weighted matching $\left(M^{t^*}, s^{M^{t^*}}\right)$ in the bipartite network $S$.

\section{Conclusions}

In this paper we extended the stable matching problem in bipartite networks to the general scenario where nodes derive value from the part of the split as well as the node they are matched to. This problem appears in real life scenarios and has applications in several problems such as marriage and matching theories, group selection, bargaining in networks and exchanges in networks. We studied a very general case when the value is continuous and strictly increasing in the part of the split and proved the existence of a stable weighted matching. The key ingredient to the proof is the existence of strictly monotonic and continuous stable alternating spanning tree generating offer profiles that helps us exend the Hungarian method to the generalized case. The method of computing a stable alternating spanning tree generating offer profile is not very efficient. However, with additional structure on the value functions and correlations between value functions, more efficient methods can be employed and will be an interesting line of future work.


\bibliographystyle{plain}
\bibliography{Ankur-Mani-bibtex}

\end{document}